\newtheorem{theorem}{Theorem}
\newtheorem{proposition}[theorem]{Proposition}
\newtheorem{corollary}[theorem]{Corollary}
\newtheorem{lemma}[theorem]{Lemma}
\newcommand{\pb}{\ar@{}[dr]|{\text{\pigpenfont J}}}
\newcommand{\RMod}{R{\rm \mbox{-Mod}}}
\newcommand{\Rmod}{R{\rm \mbox{-mod}}}
\newcommand{\mcEproj}{\mcE{\rm \mbox{-}Proj}}
\newcommand{\mcEinj}{\mcE{\rm \mbox{-inj}}}
\newcommand{\Ab}{{\rm {\bf \mbox{Ab}}}}
\newcommand{\Add}{{\rm {\bf Add}}}
\newcommand{\Arr}{{\rm Arr}}
\newcommand{\Ext}{{\rm Ext}}
\newcommand{\LExt}{{\rm LExt}}
\newcommand{\Hom}{{\rm Hom}}
\newcommand{\im}{{\rm Im}}
\newcommand{\ME}{{\rm ME}}
\newcommand{\op}{{\rm {\footnotesize op}}}
\newcommand{\gre}{\epsilon}
\newcommand{\mcA}{\mathcal{A}}
\newcommand{\mcE}{\mathcal{E}}
\newcommand{\mcF}{\mathcal{F}}
\newcommand{\mcC}{\mathcal{C}}
\newcommand{\mcI}{\mathcal{I}}
\newcommand{\mcJ}{\mathcal{J}}
\newcommand{\mcK}{\mathcal{K}}
\newcommand{\mcM}{\mathcal{M}}
\newcommand{\dis}{\displaystyle}
\begin{document}

\footskip30pt

\date{}

\title{Lattice theoretic properties of approximating ideals}

\author{X.H.\ Fu}
\address{School of Mathematics and Statistics, Northeast Normal University, Changchun, CHINA}
\email{fuxianhui@gmail.com}

\author{I.\ Herzog}
\address{The Ohio State University at Lima, Lima, Ohio, USA}
\email{herzog.23@osu.edu}

\author{J.\ Hu}
\address{School of Mathematics and Physics, Jiangsu University of Technology,
 Changzhou, CHINA}
\email{jiangshenghu@jsut.edu.cn}

\author{H.\ Zhu}
\address{College of Science, Zhejiang University of Technology, Hangzhou, CHINA}
\email{hyzhu@zjut.edu.cn}

\thanks{This work was partially supported by NSFC (11671069,11771212), the Natural Science Foundation of Zhejiang Provincial (LY18A010032), the Qing Lan Project of Jiangsu Province, a Jiangsu Government Scholarship for Overseas Studies (JS-2019-328). The second author was partially supported by Northeast Normal University.}

\subjclass[2010]{18E10, 18G15}

\keywords{exact category, special preenvelope, special preenveloping ideal, complete ideal cotorsion pair, $\ME$-conflation of arrows, Leibniz $\Ext$}

\begin{abstract}
It is proved that a finite intersection of special preenveloping ideals in an exact category $(\mcA; \mcE)$ is a special preenveloping ideal. Dually, a finite intersection of special precovering ideals is a special precovering ideal. A counterexample of Happel and Unger shows that the analogous statement about special preenveloping subcategories does not hold in classical approximation theory. If the exact category has exact coproducts, resp., exact products, these results extend to intersections of infinite families of special peenveloping, resp., special precovering, ideals. These techniques yield the Bongartz-Eklof-Trlifaj Lemma:  if $a \colon A \to B$ is a morphism in $\mcA,$ then the ideal $a^{\perp}$ is special preenveloping. This is an ideal version of the Eklof-Trlifaj Lemma, but the proof is based on that of Bongartz' Lemma. The main consequence is that the ideal cotorsion pair generated by a small ideal is complete.
\end{abstract}

\maketitle

\section{Introduction}

Approximation theory~\cite{EJ, GT} is the part of relative homological algebra devoted to the question of which subcategories $\mcC$ of an abelian category $\mcA$ admit preenvelopes/precovers of objects $A \in \mcA.$  This question was first explored in special situations, by Enochs~\cite{E} in his work on flat precovers of modules and, independently, by Auslander and Smal\o~\cite{AS}, who were interested in the preprojective partition of an artin algebra; they called preenvelopes left approximations, and precovers right approximations.
The general theory is based on the more operable notion of a {\em special} $\mcC$-preenvelope, resp., precover. This is a preenvelope that appears as the monomorphism of a short exact sequence
$$\xymatrix@1@C=40pt{0 \ar[r] & A \ar[r]^e & \ar[r] C \ar[r] & Z \ar[r] & 0,}$$
where $C \in \mcC$ and $Z$ belongs to the left orthogonal category ${^{\perp}}\mcC := \{\,  X \; | \; \Ext^1 (X,C) = 0 \; \mbox{for all} \; C \in \mcC \, \}.$

{\em Ideal} approximation theory~\cite{FGHT} is a generalization of approximation theory in which the objects $A \in \mcA$ are approximated by the morphisms of an ideal $\mcJ$ of $\mcA.$ In order to define a {\em special} $\mcJ$-preenvelope, one assumes that the ambient category is endowed with an exact structure
$(\mcA; \mcE)$ and defines two morphisms $a : A_0 \to A_1$ and $b : B_0 \to B_1$ in $\mcA$ to be $\Ext${\em -orthogonal} if the map $\Ext (a,b),$ displayed as the diagonal in
$$\xymatrix@R=40pt@C=40pt{
\Ext (A_1, B_0)  \ar[r]^{\Ext (A_1, b)} \ar[d]_{\Ext(a,B_0)} \ar[rd]^{\Ext (a,b)} & \Ext (A_1, B_1) \ar[d]^{\Ext (a, B_1)} \\
\Ext (A_0, B_0) \ar[r]_{\Ext (A_0, b)} & \Ext (A_0, B_1),
}$$
is $0.$ A {\em special} $\mcJ${\em -preenvelope} of $B \in \mcA$ is a morphism $j : B \to C_0$ in $\mcJ$ that appears in a morphism of $\mcE$-conflations
\begin{equation} \label{Eq:special preenv}
\xymatrix@C=30pt@R=30pt{
B \ar[r]^j \ar@{=}[d] & C_0 \ar[r] \ar[d]^c & A_0 \ar[d]^a \\
B \ar[r] & C_1 \ar[r] & A_1,
}\end{equation}
with $a \in \mcI = {^{\perp}}\mcJ.$ In this paper, we prove the following main result.
\bigskip

\noindent {\bf Theorem~\ref{T:finite intersection}} {\em Let $\mcJ_1,$ $\mcJ_2 \lhd \mcA$ be ideals of $(\mcA; \mcE).$ If an object $B \in \mcA$ has a special $\mcJ_1$-preenvelope $m^1 : B \to C^1$ and a special $\mcJ_2$-preenvelope $m^2 : B \to C^2,$
then a special $(\mcJ_1 \cap \mcJ_2)$-special preenvelope of $B$ is given by the pushout $m^1 \amalg_B m^2 : B \to C^1 \amalg_B C^2.$}
\bigskip

The dual result, that the intersection of two special precovering ideals is also special precovering, is proved using the dual argument. There is no analogue of Theorem~\ref{T:finite intersection} in the classical theory, as indicated by a counterexample of Happel and Unger~\cite{HU}. Theorem~\ref{T:finite intersection} is closely related to the ideal version of Christensen's Lemma~\cite[Theorem 8.4]{FH}, which implies that the product of two special preenveloping ideals is also special preenveloping.
\bigskip

Salce defined a cotorsion pair in $\mcA$ to be a pair $(\mcF, \mcC)$ of subcategories such that $\mcF = {^{\perp}}\mcC$ and $\mcC = \mcF^{\perp}.$ The cornerstone of approximation theory is Salce's Lemma, which states that if $(\mcF, \mcC)$ is a cotorsion pair in $\mcA,$ then, under suitable conditions, every object in $\mcA$ has a special $\mcC$-preenvelope if and only if every object has a special $\mcF$-precover; such cotorsion pairs are called complete. An {\em ideal} cotorsion pair in an exact category $(\mcA; \mcE)$ is a pair $(\mcI, \mcJ)$ of ideals such that $\mcJ = \mcI^{\perp}$ and $\mcI = {^{\perp}}\mcJ.$  Ideal approximation theory began with an ideal version of Salce's Lemma~\cite[Theorem 18]{FGHT}, which states that if $(\mcI, \mcJ)$ is an ideal cotorsion pair in an exact category $(\mcA; \mcE)$ with enough projective morphisms and injective morphism, then every object in $\mcA$ admits a special $\mcJ$-preenvelope if and only if every object has a special $\mcI$-precover. Such cotorsion pairs $(\mcI, \mcJ)$ are called complete. Theorem~\ref{T:finite intersection} implies that if $(\mcI_1, \mcI_1^{\perp})$ and $(\mcI_2, \mcI_2^{\perp})$ are complete cotorsion pairs, then $\mcI_1^{\perp} \cap \mcI_2^{\perp}$ arises as the ideal on the right side of a complete cotorsion pair. The corresponding ideal on the left is given by the following. \bigskip

\noindent {\bf Corollary~\ref{C:finite sum}} {\em If $(\mcI_1, \mcI_1^{\perp})$ and $(\mcI_2, \mcI_2^{\perp})$ are complete cotorsion pairs in an exact category $(\mcA; \mcE)$ with enough projective morphisms and injective morphisms, then so is $(\mcEproj \diamond (\mcI_1 + \mcI_2), \mcI_1^{\perp} \cap \mcI_2^{\perp}).$}  \bigskip

It is a general fact from ideal approximation theory that whenever an ideal $\mcK \subseteq {^{\perp}}\mcM$ is contained in a left orthogonal ideal, then $\mcEproj \diamond \mcK \subseteq {^{\perp}}\mcM,$ so Corollary~\ref{C:finite sum} shows that the ideal $\mcI_1 + \mcI_2$ is almost a left orthogonal ideal, it just needs to be closed under ME extensions by projective morphisms (see Proposition \ref{P:left closed}). The infinite versions of Theorem~\ref{T:finite intersection} and Corollary~\ref{C:finite sum} are given in Theorem~\ref{T:infinite intersection} and Corollary~\ref{C:infinite sum}. A version of Theorem~\ref{T:infinite intersection} for projective classes in a triangulated category was proved by J.D.\ Christensen~\cite[Proposition 3.1]{C}.

In the last section, we prove an ideal version of the Eklof-Trlifaj Lemma~\cite[Theorem 2]{ET}, but the proof is based on the idea of the proof of Bongartz' Lemma~\cite{GT}, so we call this result the Bongartz-Eklof-Trlifaj (BET) Lemma. The Eklof-Trlifaj Lemma implies~\cite[Theorem 10]{ET} that a cotorsion pair generated by a set is complete. Analogously, the BET Lemma implies (Corollary~\ref{C:small left closure}) that an {\em ideal} cotorsion pair generated by a set is complete.
\pagebreak

\noindent {\bf Notation:} Let $A^i,$ $i \in I,$ be a family of objects in an additive category $\mcA$ and suppose that both the product $\Pi_i A^i$ and coproduct $\amalg_i A^i$ exist in $\mcA.$ If the canonical morphism from $\amalg_i A^i \to \Pi_i A^i$ is an isomorphism, we denote (see~\cite[p.83]{R}) the product/coproduct by $\oplus_i A^i.$ The {\em diagonal} morphism is denoted by $d = \left( \begin{array}{c } 1_A \\ 1_A \end{array} \right) \colon A \to A \oplus A,$ and the {\em sum} morphism by $s = (1_A, 1_A) \colon A \oplus A \to A.$

\section{Exact categories}

\subsection{Definition} In this paper, we work in the setting of an exact category $(\mcA; \mcE).$ For a comprehensive treatment of exact categories, we refer the reader to~\cite{B}, but we use the terminology of Keller~\cite{K}.
An exact category consists of an additive category $\mcA$ equipped with a collection $\mcE$ of distinguished kernel-cokernel pairs
$$\Xi \colon \xymatrix@1@C=45pt{B \ar[r]^m & C \ar[r]^p & A}$$
called {\em conflations.} The morphism $m$ is the {\em inflation of} $\Xi;$ the morphism $p$ the {\em deflation of} $\Xi.$ A morphism in $\mcA$ is called an {\em inflation,} resp., {\em deflation,} if there exists a conflation $\Xi$ of which it is the inflation, resp., deflation.  The collection $\mcE$ of conflations is closed under isomorphisms and satisfies the following axioms:

\begin{description}
\item[$E_0$] for every object $A \in \mcA,$ the morphism $1_A$ is an inflation, resp., a deflation;
\item[$E_1$] inflations, resp., deflations, are closed under composition;
\item[$E_2$] the pushout of an inflation along an arbitrary morphism exists and yields an inflation;
\item[$E_2^{\op}$] the pullback of a deflation along an arbitrary morphism exists and yields a deflation.
\end{description}

\noindent The conflations $\mcE$ of an exact category may also be considered as a category in the obvious way (see~\cite[Corollary 2.10]{B}). As such, $\mcE$ is an additive category by~\cite[Proposition 2.9]{B}.

\subsection{Ideals} An {\em ideal} $\mcJ$ of an additive category $\mcA,$ denoted $\mcJ \lhd \mcA,$ is an additive subbifunctor of $\Hom : \mcA^{\op} \times \mcA \to \Ab;$ it associates to a pair $(A,B)$ of morphisms in $\mcA$ a subgroup $\mcJ (A,B) \subseteq \Hom (A,B)$
and satisfies the usual conditions in the definition of an ideal in ring theory: for any morphism $g: A \to B$ in $\mcJ,$ the composition $fgh$ belongs to $\mcJ (X,Y),$ for any morphisms $f: B \to Y$ and $h: X \to A$ in $\mcA.$ If $\mcJ_1, \mcJ_2 \lhd \mcA$ are ideals, then the
intersection $\mcJ_1 \cap \mcJ_2 \lhd \mcA$ and sum $\mcJ_1 + \mcJ_2 \lhd \mcA$ are also ideals.

But if $\mcA$ is not a small category, then an ideal $\mcJ : (A,B) \mapsto \mcJ (A,B)$ is a class function, so in order to avoid the paradoxes of set theory, one must be careful about performing operations such as arbitrary intersection and sum of ideals, or even forming structures such as the lattice of all ideals.
For example, if $\{ a^i \; | \; i \in I \}$ is a class of morphisms in $\mcA,$ then the ideal $\langle a^i \; | \; i \in I \rangle \lhd \mcA$ generated by the $a^i$ is defined to be the ideal whose morphisms are the finite linear combinations
$\Sigma_i \, f_i a^i g_i.$ It is {\em not} defined as the intersection of all the ideals that contain the $a^i,$ for such a statement refers to ``the class of all ideals (classes)'' containing the $a_i.$ An ideal is {\em small} if it is generated by a {\em set} of morphisms.

\subsection{Approximating ideals} An ideal $\mcJ \lhd \mcA$ is {\em preenveloping} if for every object $B \in \mcA,$ there is a morphism $j : B \to J$ in $\mcJ$ with the property that every $j' : B \to J'$ in $\mcJ$ factors through $j,$
$$\xymatrix@C=30pt@R=30pt{B \ar[r]^j \ar[d]_{j'} & J \ar@{.>}[ld] \\
J'.}$$
The dual notion of a {\em precovering} ideal $\mcI \lhd \mcA$ is defined dually.

\begin{proposition} \label{P:preenv sum}
If $\mcJ_1,$ $\mcJ_2 \lhd \mcA$ are preenveloping, resp., precovering, ideals, then so is $\mcJ_1 + \mcJ_2.$
\end{proposition}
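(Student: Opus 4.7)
The plan is to build a $(\mcJ_1 + \mcJ_2)$-preenvelope of a given object $B \in \mcA$ directly out of the two hypothesized preenvelopes $j_k \colon B \to J_k$ in $\mcJ_k$, $k = 1,2$. The natural candidate is the morphism
$$j = \left( \begin{array}{c} j_1 \\ j_2 \end{array} \right) \colon B \longrightarrow J_1 \oplus J_2$$
into the biproduct, which exists because $\mcA$ is additive. The whole argument then reduces to two bookkeeping verifications.

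First I would check that $j$ itself lies in $\mcJ_1 + \mcJ_2$. Writing $\iota_k \colon J_k \to J_1 \oplus J_2$ for the coproduct inclusions, one has the decomposition $j = \iota_1 j_1 + \iota_2 j_2$; the first summand lies in $\mcJ_1(B, J_1 \oplus J_2)$ and the second in $\mcJ_2(B, J_1 \oplus J_2)$, because each $\mcJ_k$ is closed under post-composition.

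Second I would verify the preenveloping property of $j$. An arbitrary morphism $j' \colon B \to J'$ in $\mcJ_1 + \mcJ_2$ decomposes as $j' = f + g$ with $f \in \mcJ_1(B,J')$ and $g \in \mcJ_2(B,J')$. The preenveloping property of $j_1$ supplies $\alpha \colon J_1 \to J'$ with $f = \alpha j_1$, and that of $j_2$ supplies $\beta \colon J_2 \to J'$ with $g = \beta j_2$. The universal map $(\alpha,\beta) \colon J_1 \oplus J_2 \to J'$ out of the coproduct then satisfies $(\alpha,\beta) \circ j = \alpha j_1 + \beta j_2 = j'$, so $j'$ factors through $j$, as required.

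The precovering assertion is proved by the dual argument, forming $(i_1, i_2) \colon I_1 \oplus I_2 \to A$ from given $\mcI_k$-precovers $i_k \colon I_k \to A$ and using the product projections in place of the coproduct injections. No serious obstacle is anticipated; the only conceptual point is that, since the two given preenvelopes have distinct codomains, one must introduce a common target, and the additive biproduct is the natural choice.
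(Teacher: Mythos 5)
Your proposal is correct and follows exactly the paper's construction: the paper also takes $\left( \begin{array}{c} j_1 \\ j_2 \end{array} \right) \colon B \to J_1 \oplus J_2$ as the $(\mcJ_1+\mcJ_2)$-preenvelope and handles the precovering case dually. You have simply written out the two verifications that the paper leaves to the reader.
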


\begin{proof}
If $j^1 : B \to J^1$ is a $\mcJ^1$-preenvelope of $B$ and $j^2 : B \to J^2$ is a $\mcJ_2$-preenvelope, then the morphism ${\dis \left( \begin{array}{c }j^1 \\ j^2 \end{array} \right) \colon  \xymatrix@1{B \ar[r] & J^1 \oplus J^2}}$ is a $(\mcJ_1 + \mcJ_2)$-preenvelope. The dual result is proved dually.
\end{proof}

\subsection{Pushouts of inflations} Given inflations $m^1 : B \to C^1$ and $m^2 : B \to C^2,$ Axiom $E_1$ implies that the pushout $m = m^1 \amalg_B m^2 : B \to C^1 \amalg_B C^2$ is the inflation shown in the commutative diagram
$$\xymatrix@C=30pt@R=30pt{B \ar[r]^{m^1} \ar[d]_{m^2} \ar[rd]^{m} & C^1 \ar[r] \ar[d]^{p^1} & A^1 \ar@{=}[d] \\
C^2 \ar[r]^(.4){p^2} \ar[d] & C^1 \amalg_B C^2 \ar[r] \ar[d] & A^1 \\
A^2 \ar@{=}[r] & A^2,
}$$
where all the rows and columns are conflations. A preenveloping ideal $\mcJ \lhd \mcA$ is called {\em monic} preenveloping, if every object $B \in \mcA$ admits a $\mcJ$-preenvelope that is an inflation with respect to the exact structure $(\mcA; \mcE).$ The dual definition defines an {\em epic} precovering ideal.

\begin{proposition} \label{P:monic preenv sum}
If $\mcJ_1,$ $\mcJ_2 \lhd \mcA$ are monic preenveloping, resp.\ epic precovering ideals, then so are $\mcJ_1 \cap \mcJ_2$ and $\mcJ_1 + \mcJ_2.$
\end{proposition}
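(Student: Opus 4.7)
The plan is to handle the sum and intersection separately and to deduce the epic precovering cases by dualizing to the opposite exact category $(\mcA^{\op}; \mcE^{\op}).$ Fix monic $\mcJ_i$-preenvelopes $m^i \colon B \to C^i$ for $i = 1, 2.$

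For $\mcJ_1 + \mcJ_2,$ Proposition~\ref{P:preenv sum} already furnishes the $(\mcJ_1 + \mcJ_2)$-preenvelope ${\dis \left( \begin{array}{c} m^1 \\ m^2 \end{array} \right)} \colon B \to C^1 \oplus C^2,$ so only the inflation property remains. I would factor it as $(m^1 \oplus m^2) \circ d,$ where $d \colon B \to B \oplus B$ is the diagonal and $m^1 \oplus m^2 \colon B \oplus B \to C^1 \oplus C^2$ is the block-diagonal morphism. The diagonal $d$ is a split monomorphism, retracted by either projection; as such it differs from the canonical summand inclusion $B \to B \oplus B$ (which is an inflation by the split conflation $B \to B \oplus B \to B$) by an automorphism of $B \oplus B,$ and is therefore itself an inflation. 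The direct sum $m^1 \oplus m^2$ is an inflation because the direct sum of two conflations is a conflation. Axiom $E_1$ then yields the conclusion.

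For $\mcJ_1 \cap \mcJ_2,$ I would take the pushout $m = m^1 \amalg_B m^2 \colon B \to C^1 \amalg_B C^2$ displayed in the subsection on pushouts of inflations and verify that it is a monic $(\mcJ_1 \cap \mcJ_2)$-preenvelope. That $m$ is an inflation is built into that diagram. Writing $m = p^1 m^1 = p^2 m^2$ for the pushout legs $p^i,$ closure of an ideal under post-composition gives $m \in \mcJ_1 \cap \mcJ_2.$ For the preenveloping property, given $j \colon B \to J$ in $\mcJ_1 \cap \mcJ_2,$ the $\mcJ_i$-preenveloping property of $m^i$ yields $\alpha^i \colon C^i \to J$ with $\alpha^i m^i = j$ for $i = 1, 2;$ the compatibility $\alpha^1 m^1 = \alpha^2 m^2$ together with the universal property of the pushout delivers a unique $\beta \colon C^1 \amalg_B C^2 \to J$ satisfying $\beta p^i = \alpha^i,$ whence $\beta m = \alpha^1 m^1 = j.$

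No substantive obstacle arises; the argument is a direct marriage of the pushout and direct-sum constructions in $(\mcA; \mcE)$ with the universal properties of ideal preenvelopes. The only point that even warrants a mention is the inflation status of the diagonal, which is handled above. The epic precovering statements follow by applying the same argument in $(\mcA^{\op}; \mcE^{\op}),$ in which pushouts become pullbacks, the diagonal $d$ becomes the sum morphism $s \colon A \oplus A \to A,$ and inflations become deflations.
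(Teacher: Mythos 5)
Your proof is correct and follows essentially the same route as the paper: the pushout $m^1 \amalg_B m^2$ for the intersection, the factorization $(m^1 \oplus m^2)\circ d$ for the sum, and dualization for the precovering case. The only cosmetic differences are that you justify the inflation status of the diagonal $d$ by composing the split conflation inclusion with an automorphism of $B \oplus B$ where the paper cites B\"uhler's Corollary 2.14, and you spell out the universal-property verification that the pushout is a $(\mcJ_1 \cap \mcJ_2)$-preenvelope, which the paper leaves implicit.
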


\begin{proof}
Suppose that $m^1 : B \to J_1$ and $m^2 : B \to J^2$ are inflations that serve as $\mcJ_1$-preenvelope and $\mcJ_2$-preenvelope, respectively. The pushout $m^1 \amalg_B m^2$ is an inflation that serves as a $(\mcJ_1 \cap \mcJ_2)$-preenvelope. 
To see that the $(\mcJ_1 + \mcJ_2)$-preenvelope of $B$ given in the proof of Proposition~\ref{P:preenv sum} is monic, express it as a composition
$$\left( \begin{array}{c }j^1 \\ j^2 \end{array} \right) \colon \xymatrix@C=45pt{B \ar[r]^(.4)d & B \oplus B \ar[r]^{m^1 \oplus m^2} & J^1 \oplus J^1.}$$
The first morphism is an inflation, by~\cite[Corollary 2.14]{B} (let all the objects equal $B$ and all the morphisms the identity $1_B$); the second by~\cite[Proposition 2.9]{B}. Then apply Axiom $E_1.$
\end{proof}

The next lemma, which is routine to verify, describes the cokernel of a pushout of inflations.

\begin{lemma}  \label{L:key}
Let $m^1 : B \to C^1$ and $m^2 : B \to C^2$ be inflations with a common domain. The pushout $m^1 {\small \amalg_B} \, m^2 : B \to C^1 \amalg_B C^2$ is the inflation that occurs in the pushout
$$\xymatrix@C=55pt@R=30pt{
B \oplus B \ar[r]^{m^1 \oplus m^2} \ar[d]^s & C^1 \oplus C^2  \ar[r] \ar[d]^{(p^1, p^2)} & A^1 \oplus A^2 \ar@{=}[d] \\
B \ar[r]^{m^1 {\small \amalg_B} \, m^2} & \; C^1 \amalg_B C^2 \ar[r] & \; A^1 \oplus A^2
}$$
of the direct sum of conflations along the sum morphism.
\end{lemma}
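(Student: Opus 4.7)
The plan is to read the displayed diagram as an instance of axiom $E_2$ applied to the direct sum of the two given conflations, and then to use the universal property of the pushout to identify the output with $m^1 \amalg_B m^2$ and $(p^1,p^2).$ Three short steps suffice.

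First, since the class $\mcE$ of conflations is closed under finite direct sums, the top row
$$\xymatrix@1{B \oplus B \ar[r]^{m^1 \oplus m^2} & C^1 \oplus C^2 \ar[r] & A^1 \oplus A^2}$$
is an $\mcE$-conflation. Axiom $E_2$ applied to the sum morphism $s \colon B \oplus B \to B$ then produces a pushout object $P,$ an inflation $B \to P,$ and a canonical identification of its cokernel with $A^1 \oplus A^2,$ since a pushout square along a kernel--cokernel pair induces an isomorphism on cokernels. This already accounts for the right-hand column of the displayed diagram being the identity on $A^1 \oplus A^2.$

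Second, I would identify $P$ with the pushout $C^1 \amalg_B C^2$ of $m^1$ and $m^2$ in $\mcA.$ Writing $s = (1_B, 1_B)$ and $m^1 \oplus m^2$ in block-diagonal form, to give a pair of morphisms $\alpha = (\alpha_1, \alpha_2) \colon C^1 \oplus C^2 \to Q$ and $\beta \colon B \to Q$ satisfying $\alpha (m^1 \oplus m^2) = \beta s$ is, componentwise, the same as giving $\alpha_i \colon C^i \to Q$ with $\alpha_1 m^1 = \beta = \alpha_2 m^2.$ But this is exactly the cocone condition defining $C^1 \amalg_B C^2$ as the pushout of $m^1$ and $m^2,$ so $P$ may be taken to be $C^1 \amalg_B C^2,$ the induced bottom inflation is $m^1 \amalg_B m^2,$ and the middle downward arrow is forced to be $(p^1, p^2).$

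The only real obstacle is the routine bookkeeping of matching the two universal properties through the block-matrix notation; once this cocone translation is written out, the remaining verifications are formal and the lemma follows.
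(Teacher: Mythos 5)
Your argument is correct and is precisely the ``routine verification'' the paper alludes to (it offers no written proof of this lemma): direct sums of conflations are conflations, axiom $E_2$ gives the pushout along $s,$ and the cocone translation $\alpha(m^1\oplus m^2)=\beta s \Leftrightarrow \alpha_1 m^1=\beta=\alpha_2 m^2$ identifies that pushout with $C^1\amalg_B C^2$ and forces the middle vertical to be $(p^1,p^2).$ Nothing is missing.
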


\section{The arrow category}

\subsection{Definition} The {\em arrow category} $\Arr (\mcA)$ of a category $\mcA$ is the category whose objects \linebreak $a: A_0 \to A_1$ are the morphisms (arrows) of $\mcA,$
and a morphism $f: a \to b$ in $\Arr (\mcA)$ is given by a pair of morphisms $f = (f_0, f_1)$ in $\mcA$ for which the diagram
\begin{equation} \label{Eq:arrow morphism}
\xymatrix@C=30pt@R=30pt{A_0 \ar[r]^{f_0} \ar[d]^{a} & B_0 \ar[d]^b \\
A_1 \ar[r]^{f_1} & B_1}
\end{equation}
commutes. The commutativity relation $bf_0 = f_1a$ implies, by Exercise 3.2.iii of~\cite{R}, that these pairs $(f_0, f_1) \in \Hom (A_0, B_0) \times \Hom (A_1, B_1)$ are the members of the pullback
\begin{equation} \label{Eq:Hom pullback}
\xymatrix@R=35pt{\ar@{}[dr]|(.2){\lrcorner}
\Hom_{\Arr (\mcA)} (a,b) \ar[r] \ar[d]  & \Hom (A_1, B_1) \ar[d]^{a^{\ast}} \\
\Hom (A_0, B_0) \ar[r]^{b_{\ast}} & \Hom (A_0, B_1),
}\end{equation}
where $a^{\ast}$ denotes pre-composition by $a$ and $b_{\ast}$ post-composition by $b.$ Every category $\mcA$ may be embedded, fully and faithfully, into its arrow category $\Arr (\mcA)$ by the functor $1: \mcA \to \Arr (\mcA),$ $B \mapsto 1_B,$ that associates to an object $B$ the identity arrow $1_B : B \to B.$

\subsection{The exact category of arrows} If $\mcA$ is an additive category, then so is $\Arr (\mcA).$ The direct sum of arrows $a: A_0 \to A_1$ and $b : B_0 \to B_1$ is given by $a \oplus b: \xymatrix@C=40pt{A_0 \oplus B_0 \ar[r]^{\left( \begin{array}{cc} a & 0 \\ 0 & b \end {array} \right)} & A_1 \oplus B_1.}$ An ideal $\mcI \lhd \mcA$ is closed under finite direct sums, as
$$a \oplus b = \left( \begin{array}{c} 1 \\ 0 \end{array} \right) a \; (1,0) + \left( \begin{array}{c} 0 \\ 1 \end{array} \right) b \; (0,1).$$
On the other hand, if $\mcI_1,$ $\mcI_2 \lhd \mcA,$ then $\mcI_1 + \mcI_2 = \langle a \oplus b \; |\; a \in \mcI_1, \; b \in \mcI_2 \rangle,$ because $$a + b : \xymatrix@1{A \ar[r]^(.4)d & A \oplus A \ar[r]^{a \oplus b} & B \oplus B \ar[r]^(.6)s & B.}$$

The additive category $\Arr (\mcA)$ may be equipped with an exact structure for which the collection $\Arr (\mcE)$ of distinguished kernel-cokernel pairs
$\xi: \xymatrix@1{b \ar[r]^m & c \ar[r]^p & a,}$ is given by
morphisms of conflations in $(\mcA; \mcE),$
\begin{equation} \label{Eq:arrow con}
\xymatrix@C=30pt@R=35pt{\Xi_0: B_0 \ar@<-2.5ex>[d]_{\xi} \ar@<2ex>[d]^b \ar[r]^-{m_0} & C_0 \ar[r]^{p_0} \ar[d]^c & A_0 \ar[d]^a \\
\Xi_1: B_1 \ar[r]^-{m_1} & C_1 \ar[r]^{p_1} & \; A_1.}
\end{equation}

The functor $\mcA \to \Arr (\mcA),$ $A \mapsto 1_A,$ is exact and so induces a morphism
$$\Ext_{(\mcA; \mcE)} (A,B) \mapsto \Ext_{(\Arr (\mcA); \Arr (\mcE))} (1_A, 1_B).$$ To see that this morphism of $\Ext$-groups is an isomorphism, consider a conflation of arrows of the form $\xi : \xymatrix@1@C=15pt{1_B \ar[r]^m & c \ar[r]^p & 1_A.}$
By~\cite[Corollary 3.2]{B}, $c: C_0 \to C_1$ is an isomorphism in $\mcA$ and $\xi$ is isomorphic to the extension $\xymatrix@1{1_B \ar[r]^{1_{m_0}} & 1_{C_0} \ar[r]^{1_{p_0}} & 1_A}$ that appears in the front of the diagram
$$\xymatrix@C=15pt@R=15pt{
 & B \ar[rr]^{m_0} \ar@{=}[dd]|!{[ld];[rd]}\hole  \ar@{=}[ld] && C_0 \ar[rr]^{p_0} \ar[dd]^(.7)c|!{[ld];[rd]}\hole \ar@{=}[ld] && A \ar@{=}[dd] \ar@{=}[ld] \\
B \ar[rr]^(.7){m_0} \ar@{=}[dd] && C_0 \ar[rr]^(.7){p_0} \ar@{=}[dd] && A \ar@{=}[dd] \\
& B \ar[rr]^(.3){m_1}|!{[ru];[rd]}\hole \ar@{=}[ld] && C_1 \ar[rr]^(.3){p_1}|!{[ru];[rd]}\hole \ar[ld]^(.4){c^{-1}} && A \ar@{=}[ld] \\
B \ar[rr]^(.7){m_0} && C_0 \ar[rr]^(.7){p_0} && A.
}$$

\subsection{$\Ext$-orthogonality} As defined in the Introduction, two morphisms $a$ and $b$ of $\Arr (\mcA)$ are $\Ext$-orthogonal if the morphism $\Ext_{\mcA} (a,b) : \Ext_{\mcA} (A_1, B_0) \to \Ext_{\mcA} (A_0, B_1)$ of abelian groups is $0.$
If $\mcM$ is a class of morphisms in $\mcA,$ then ${^{\perp}}\mcM$ is the ideal of morphisms left $\Ext$-orthogonal to every $m \in \mcM.$ An ideal of the form ${^{\perp}}\mcM$ is said to be {\em left closed,} and the {\em left closure} of a collection $\mcM$ is given by the left closed ideal
${\perp}(\mcM^{\perp}).$ One defines a {\em right closed} ideal, and the {\em right closure} of a collection of morphisms dually, and an ideal cotorsion pair consists of ideals $(\mcI, \mcJ)$ for which $\mcI = {^{\perp}}\mcJ$ and $\mcJ^{\perp} = \mcI.$ The ideal cotorsion pair of the form $({^{\perp}}(\mcM^{\perp}), \mcM^{\perp})$ is said to be {\em generated} by $\mcM,$ while the ideal cotorsion pair $({^{\perp}}\mcM, ({^{\perp}}\mcM)^{\perp})$ is {\em cogenerated} by $\mcM.$

\subsection{Special approximating ideals} The ideal $\mcJ$ is called {\em special preenveloping} if every object in $\mcA$ has a special $\mcJ$-preenvelope~(\ref{Eq:special preenv}); special precovering ideals are defined dually.
The special $\mcJ$-preenvelope $j : B \to C_0$ of an object $B \in \mcA$ displayed in~(\ref{Eq:special preenv}) may be viewed as part of a conflation in $(\Arr (\mcA), \Arr (\mcE)),$ given by $\xymatrix@1{1_B \ar[r] & c \ar[r] & a,}$ where $a \in {^{\perp}}\mcJ$ and the preenvelope is the domain morphism
$B \to C_0$ of the inflation.

\begin{theorem} \label{T:finite intersection}
If $\mcJ_1,$ $\mcJ_2 \lhd \mcA$ are special preenveloping ideals, then so is $\mcJ_1 \cap \mcJ_2.$ Moreover, if $m^1 : B \to C^1$ is a special $\mcJ_1$-preenvelope of $B,$ and $m^2 : B \to C^2$ is a special $\mcJ_2$-preenvelope of $B,$  then
a special $(\mcJ_1 \cap \mcJ_2)$-preenvelope of $B$ is given by the pushout $m^1 \amalg_B m^2 : B \mapsto C^1 \amalg_B C^2.$
\end{theorem}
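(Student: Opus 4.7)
The plan is to lift the entire construction to the arrow exact category $(\Arr(\mcA); \Arr(\mcE))$, in which each special preenvelope becomes one conflation of arrows, and then to apply Lemma~\ref{L:key} inside this exact category.

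First I would reformulate the hypotheses. A special $\mcJ_i$-preenvelope of $B$ amounts to an inflation of arrows $\mu^i \colon 1_B \to c^i$ in $(\Arr(\mcA); \Arr(\mcE))$ whose domain component is $m^i \colon B \to C^i$ and which appears in a conflation
$$\xi^i \colon \xymatrix@1@C=22pt{1_B \ar[r]^{\mu^i} & c^i \ar[r] & a^i}$$
with $a^i \in {}^{\perp}\mcJ_i$. Invoking axiom $E_2$ inside the arrow exact category, the pushout $\mu^1 \amalg_{1_B} \mu^2$ is an inflation of arrows. I would then apply Lemma~\ref{L:key} to the two inflations $\mu^1$ and $\mu^2$ in $\Arr(\mcA)$: its cokernel is $a^1 \oplus a^2$, and reading off domain components recovers exactly the pushout $m^1 \amalg_B m^2 \colon B \to C^1 \amalg_B C^2$ in $\mcA$, now fitted into a morphism of $\mcE$-conflations whose right-hand vertical is $a^1 \oplus a^2$.

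Two verifications then remain. That $m^1 \amalg_B m^2 \in \mcJ_1 \cap \mcJ_2$: the pushout identity $m^1 \amalg_B m^2 = p^i m^i$, where $p^i \colon C^i \to C^1 \amalg_B C^2$ is the pushout map, exhibits the new morphism as a composite involving $m^i \in \mcJ_i$, hence it lies in each ideal $\mcJ_i$ and therefore in their intersection. That $a^1 \oplus a^2 \in {}^{\perp}(\mcJ_1 \cap \mcJ_2)$: additivity of $\Ext$ in both variables yields the decomposition
$$\Ext(a^1 \oplus a^2, j) \;=\; \Ext(a^1, j) \oplus \Ext(a^2, j)$$
for every $j \in \mcJ_1 \cap \mcJ_2$, and both summands vanish because $j$ lies in each $\mcJ_i$ while $a^i \in {}^{\perp}\mcJ_i$.

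The main obstacle I anticipate is not conceptual but organizational: one must verify that the pushout $\mu^1 \amalg_{1_B} \mu^2$ in $(\Arr(\mcA); \Arr(\mcE))$ is computed componentwise, so that its domain component genuinely agrees with the pushout $m^1 \amalg_B m^2$ named in the statement; this is a general feature of the componentwise exact structure on the arrow category. Once that compatibility is in hand, Lemma~\ref{L:key} and the additivity of Leibniz $\Ext$ assemble the proof cleanly.
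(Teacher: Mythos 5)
Your proposal is correct and follows essentially the same route as the paper: both pass to the exact category $(\Arr(\mcA);\Arr(\mcE))$, realize each special preenvelope as a conflation $1_B \to c^i \to a^i$ with $a^i \in {}^{\perp}\mcJ_i$, apply Lemma~\ref{L:key} to identify the pushout's cokernel as $a^1 \oplus a^2$, and conclude by the factorization of $m^1 \amalg_B m^2$ through each $m^i$ together with the additivity observation (which the paper phrases as ${}^{\perp}\mcJ_1 + {}^{\perp}\mcJ_2 \subseteq {}^{\perp}(\mcJ_1 \cap \mcJ_2)$). The only differences are cosmetic: your explicit check that $\Ext(a^1\oplus a^2, j)$ splits is exactly the content of that inclusion.
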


\begin{proof}
Each special preenvelope may be viewed as the domain morphism of an inflation that is part of a conflation $\xymatrix@1{1_B \ar[r] & c^i \ar[r] & a^i,}$ $i = 1,2$ in the exact category $(\Arr (\mcA), \Arr (\mcE)),$ with $a^i \in {^{\perp}}\mcJ_i.$ By Lemma~\ref{L:key},
the pushout in $(\Arr (\mcA), \Arr (\mcE))$ is given by the inflation in
$$\xymatrix@1{1_B \ar[r] & c^1 \amalg c^2 \ar[r] & a^1 \oplus a^2.}$$
Now the domain morphism of this inflation is the pushout $m^1 \amalg_B m^2: B \to C^1 \amalg_B C^2$ of the two special preenvelopes, and so belongs to $\mcJ_1 \cap \mcJ_2.$ On the other hand, $a^1 \oplus a^2 \in {^{\perp}}\mcJ_1 + {^{\perp}}\mcJ_2 \subseteq {^{\perp}}(\mcJ_1 \cap \mcJ_2),$ as required.
\end{proof}

A dual argument shows that a finite intersection of special precovering ideals is itself special precovering. 

\subsection{A counterexample} Let $kQ$-mod be the category of finitely generated left $kQ$-modules where $k$ is an algebraically closed field and $kQ$ is the path algebra of the quiver $Q$
$$\xymatrix{
  &  2\ar[ld]  \\
1&              &  3. \ar[ll] \ar[lu]}$$
Let $T_1=P(1)\oplus P(3)\oplus\tau S(2)$ and $T_2=\tau T_1$. By~\cite[Example]{HU}, $T_1^\perp$ and $T_2^\perp$ are special preenveloping classes, but $T_1^\perp\bigcap T_2^\perp$ is not.

\section{The $\ME$-exact structure of arrows}

The goal of this section is to apply Salce's Lemma to Theorem~\ref{T:finite intersection} and describe the complete ideal cotorsion pair generated by a sum $\mcI_1 + \mcI_2$ of special precovering ideals.

\subsection{The pushout-pullback factorization} The conflation $\xi$ of arrows displayed in (\ref{Eq:arrow con}) admits a pushout-pullback factorization~\cite[Proposition 3.1]{B}
$$\xymatrix@C=30pt@R=30pt{\Xi_0 : \; B_0 \ar[r] \ar@<2ex>[d]^b \ar@<-3ex>[dd]_{\xi} & C_0 \ar[d]^{c_1} \ar[r] & A_0 \ar@{=}[d] \\
\;\;\;\Xi: B_1 \ar[r] \ar@{=}@<2ex>[d] & C \ar[r] \ar[d]^{c_2} & A_0 \ar[d]^a \\
\Xi_1: \; B_1 \ar[r] & C_1 \ar[r] & \; A_1,
}$$
which shows that the extension $\xi \in \Ext (a,b)$ is given by pairs $(\Xi_0, \Xi_1) \in \Ext (A_0, B_0) \times \Ext (A_1, B_1)$ in the pullback
\begin{equation} \label{Eq:Ext pullback}
\xymatrix@R=35pt@C=45pt{\ar@{}[dr]|(.2){\lrcorner}
\Ext_{\Arr (\mcA)} (a,b) \ar[r] \ar[d]  & \Ext_{\mcA} (A_1, B_1) \ar[d]^{\Ext (a, B_1)} \\
\Ext_{\mcA} (A_0, B_0) \ar[r]^{\Ext (A_0, b)} & \Ext_{\mcA} (A_0, B_1).
}\end{equation}
The composition $\gre : \Ext_{\Arr (\mcA)} (a,b) \to \Ext_{\mcA} (A_0, B_1)$ of the pullback diagram associates to $\xi$ the conflation $\Xi$ in the middle of the pushout-pullback decomposition.
A conflation \linebreak $\xi \in \Ext_{\Arr (\mcA)} (a,b)$ is {\em null-homotopic} if $\gre (\xi) = 0.$

\subsection{$\ME$-conflations of arrows} The most interesting conflations in $(\Arr (\mcA), \Arr (\mcE))$ are the $\ME$-conflations. A conflation of arrows $\xi: \xymatrix@1@C=15pt{b \ar[r] & c \ar[r] & a}$ in the exact structure $(\Arr (\mcA); \Arr (\mcE)$ is said to be $\ME$ ({\em mono-epi}) if there exists a factorization
$$\xymatrix{
\Xi_0 : B_0 \ar[r] \ar@{=}@<2ex>[d] \ar@<-3ex>[dd]_{\xi} & C_0 \ar[r] \ar[d]^{c_1} & A_0 \ar[d]^a \\
 \;\; \;\;\;\; B_0 \ar[r] \ar@<2ex>[d]^b & C \ar[r] \ar[d]^{c_2} & A_1 \ar@{=}[d]  \\
\Xi_1 : B_1 \ar[r] & C_1 \ar[r] & A_1,
}$$ with $c = c_2 c_1.$ The name ``mono-epi'' derives from the fact that in the homotopy category of conflations, which is abelian, this is a factorization of the morphism $\xi$ as a monomorphism followed by an epimorphism; the pushout-pullback factorization of
$\xi$ is the canonical epi-mono decomposition. The notation $c = b \star a$ is used to indicate that the arrow $c$ is an $\ME$-extension of $a$ by $b.$ \pagebreak

\subsection{Leibniz Ext} The pullback property of the diagram (\ref{Eq:Ext pullback}) ensures the existence of a unique morphism $\widehat{\Ext}(a,b)$ that makes the diagram
$$\xymatrix@R=40pt@C=40pt{
\Ext (A_1, B_0) \ar@/_/[ddr]_{\Ext (a,B_0)} \ar@/^/[rrd]^{\Ext (A_1, b)} \ar@{.>}[dr]|-{\widehat{\Ext} (a,b)} \\
 & \Ext_{\Arr (\mcA)} (a,b) \ar[r] \ar[d] \ar[rd]^{\gre} & \Ext (A_1, B_1) \ar[d]^{\Ext (a, B_1)} \\
 & \Ext (A_0, B_0) \ar[r]^{\Ext (A_0, b)} & \Ext (A_0, B_1)
}$$
commute. This morphism is an example of a {\em Leibniz cotensor}~\cite[Definition C.2.8]{RV} and, following the practice of Riehl and Verity, we call it the {\em Leibniz Ext} of the arrows $a$ and $b;$ its image \linebreak $\LExt (a,b) := \im \, \widehat{\Ext}(a,b) \subseteq \Ext_{\Arr (\mcA)} (a,b)$ consists of the 
$\ME$-conflations. If we denote by $\ME \subseteq \Arr (\mcE)$ the collection of $\ME$-conflations of arrows, then $(\Arr (\mcA); \ME)$ is an exact substructure of $(\Arr (\mcA); \Arr (\mcE))$ \cite[Theorem 3.2]{FH}, whose $\Ext$ functor is $\LExt.$ 
Because $\Ext (a,b) = \gre \, \widehat{\Ext}(a,b),$ the morphisms $a$ and $b$ are $\Ext$-orthogonal if and only if every $\ME$-extension $b \star a$ is null-homotopic.

In order to better understand Leibniz $\Ext,$ let us compare it to the {\em Leibniz Hom} of arrows $a$ and $b.$ This is the map $\widehat{\Hom} (a,b)\colon \Hom_{\mcA} (A_1, B_0) \to \Hom_{\Arr {\mcA}} (a,b)$ that exists by the pullback property of the diagram~(\ref{Eq:Hom pullback}),
$$\xymatrix@R=40pt@C=40pt{
\Hom (A_1, B_0) \ar@/_/[ddr]_{\Hom (a,B_0)} \ar@/^/[rrd]^{\Hom (A_1, b)} \ar@{.>}[dr]|-{\widehat{\Hom} (a,b)} \\
 & \Hom_{\Arr (\mcA)} (a,b) \ar[r] \ar[d] & \Hom (A_1, B_1) \ar[d]^{a^{\ast}} \\
 & \Hom (A_0, B_0) \ar[r]^{b_{\ast}} & \Hom (A_0, B_1).
}$$
Its image consists of the morphisms $f \colon a  \to b$ whose diagram (\ref{Eq:arrow morphism}) admits a morphism $v : A_1 \to B_0$ that makes both of the triangles in
$$\xymatrix@C=35pt@R=35pt{A_0 \ar[r]^{f_0} \ar[d]^{a} & B_0 \ar[d]^b \\
A_1 \ar[r]^{f_1} \ar@{.>}[ru]|-v & B_1}$$
commute. According to Definition 1.1.2 of~\cite{H}, the arrow $a$ has the {\em left lifting property} with respect to $b,$ or, $b$ has the {\em right lifting property} with respect to $a,$ if $\widehat{\Hom} (a,b)$ is onto. 
We can therefore define $a$ to have the {\em left $\Ext$-lifting property} with respect to $b,$ or $b$ to have the {\em right $\Ext$-lifting property} with respect to $a$ if every conflation of arrows in $\Ext_{\Arr (\mcA)} (a,b)$ is $\ME.$

\subsection{Projective morphisms}
A morphism $p: Z \to A$ in $(\mcA; \mcE)$ is {\em projective} if it belongs to the minimum left closed ideal $\mcEproj := {^{\perp}}(0^{\perp}) = {^{\perp}}\Hom,$ that is, if it is left $\Ext$-orthogonal to every morphism in $\mcA.$ Equivalently, the pullback along $p$ of any $\mcE$-conflation
$$\xymatrix@R=30pt@C=30pt{
 &  & Z \ar[d]^p \ar@{.>}[ld] \\
B \ar[r] & C \ar[r] & A
}$$
yields a trivial $\mcE$-conflation and a factorization, as displayed, occurs. A morphism $p: P_0 \to P_1$ in $(\mcA; \mcE)$ is projective if and only if every conflation $\xymatrix@1{a \ar[r] & b \ar[r] & p}$ in the $\ME$-exact category $(\Arr (\mcA), \ME)$ is null homotopic. The dual notion is that of an {\em injective} morphism, and the ideal of such is denoted by $\mcEinj.$

The following property of a left closed ideal is verified in~\cite[Corollary 5.4]{FH}.

\begin{proposition} \label{P:left closed}
If $\mcI \lhd (\mcA; \mcE)$ is a left closed ideal and $i \in \mcI,$ then  $p \star i \in \mcI,$ for every projective morphism $p.$
\end{proposition}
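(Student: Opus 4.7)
The plan is to unpack the definition of a left closed ideal: since $\mcI = {}^{\perp}(\mcI^{\perp})$, it will suffice to take an arbitrary $j \in \mcI^{\perp}$ and show that the morphisms $c := p \star i$ and $j$ are $\Ext$-orthogonal, i.e.\ $j_* c^* \xi = 0$ in $\Ext(C_0, J_1)$ for every $\xi \in \Ext(C_1, J_0)$. The main tool will be an ME-factorization of the conflation of arrows $p \to c \to i$, which furnishes $\mcE$-conflations
\[
\Xi_0 \colon P_0 \xrightarrow{m_0} C_0 \xrightarrow{p_0} I_0, \quad
\Xi' \colon P_0 \xrightarrow{m'} C \xrightarrow{p'} I_1, \quad
\Xi_1 \colon P_1 \xrightarrow{m_1} C_1 \xrightarrow{p_1} I_1,
\]
together with a factorization $c = c_2 c_1 \colon C_0 \to C \to C_1$ satisfying $p' c_1 = i p_0$ and $c_2 m' = m_1 p$.

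The argument will then be a diagram chase. Since $p \in \mcEproj = {}^{\perp}\Hom$, testing against the identity arrows $1_X$ shows that the pullback $p^* \colon \Ext(P_1, X) \to \Ext(P_0, X)$ vanishes for every object $X$. Consequently
\[
m'^*(c_2^* \xi) = (c_2 m')^* \xi = (m_1 p)^* \xi = p^*(m_1^* \xi) = 0,
\]
so the long exact sequence obtained by applying $\Hom(-, J_0)$ to the middle conflation $\Xi'$ will produce $\zeta \in \Ext(I_1, J_0)$ with $c_2^* \xi = p'^* \zeta$. Pushing along $j$ and pulling back along $c_1$, and using $p' c_1 = i p_0$, then yields
\[
j_* c^* \xi \;=\; c_1^* p'^* j_* \zeta \;=\; (p' c_1)^* j_* \zeta \;=\; (i p_0)^* j_* \zeta \;=\; p_0^*\bigl(i^* j_* \zeta\bigr) \;=\; p_0^*\,\Ext(i,j)(\zeta),
\]
which vanishes because $\Ext(i, j) = 0$ by the orthogonality of $i \in \mcI$ and $j \in \mcI^{\perp}$.

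The hard part will be recognizing the right decomposition of $c$. The ME-factorization interpolates a middle conflation $\Xi'$ whose upper half is trivialized by the vanishing of $p^*$ and whose lower half carries the interaction between $i$ and $j$; finding this interpolation is the crux. Once $\Xi'$ is identified, the remainder reduces to routine commutation of pullbacks and pushforwards in the standard long exact $\Ext$-sequences.
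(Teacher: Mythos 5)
Your argument is correct. Since $\mcI$ is left closed, $\mcI={^{\perp}}(\mcI^{\perp})$, so it suffices to show $\Ext(p\star i,j)=j_*c^*=0$ for every $j\in\mcI^{\perp}$; your chase does exactly that: the $\ME$-factorization gives $c=c_2c_1$ with $c_2m'=m_1p$ and $p'c_1=ip_0$, the projectivity of $p$ (applied to identity arrows) kills $m'^*(c_2^*\xi)$, the six-term exact sequence for $\Ext$ along $\Xi'$ lifts $c_2^*\xi$ to some $\zeta\in\Ext(I_1,J_0)$, and then $j_*c^*\xi=p_0^*\,\Ext(i,j)(\zeta)=0$ by the orthogonality of $i$ and $j$. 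Note that the paper itself gives no proof, deferring to \cite[Corollary 5.4]{FH}, so your computation serves as a self-contained verification of the cited fact. One small remark: you describe ``finding the interpolating conflation $\Xi'$'' as the crux, but no search is needed --- the factorization through $\Xi'$ is part of the data defining an $\ME$-extension $c=p\star i$, so the only real content is the two-step $\Ext$ chase you carry out.
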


The proposition implies that for any ideal $\mcI \lhd \mcA,$ $\mcEproj \diamond \mcI \subseteq {^{\perp}}(\mcI^{\perp}).$ The category $(\mcA; \mcE)$ {\em has enough projective morphisms} if every object is the codomain of a projective morphism; {\em having enough injective morphisms} is defined dually.

\subsection{Salce's Lemma} Salce's Lemma is the cornerstone of approximation theory. The ideal version of Salce's Lemma, proved in~(\cite[Theorem 18]{FGHT} and \cite[Theorem 6.3]{FH}), states that if the category $(\mcA; \mcE)$ has enough projective and enough injective morphisms,
and $(\mcI, \mcJ)$ is a cotorsion pair in an exact category $(\mcA; \mcE)$ with enough projective and enough injective morphisms, then $\mcI$ is special precovering if and only if $\mcJ$ is special preenveloping. Such cotorsion pairs $(\mcI, \mcJ)$ are called {\em complete.}

Suppose that $\mcJ$ is a special precovering ideal and consider a special $\mcJ$- preenvelope $j : B \to C_0$ as displayed in~(\ref{Eq:special preenv}). The morphism $a :  A_0 \to A_1$ is called a $\mcJ${\em -cosyzygy} of $B,$ and a $\mcJ${\em -cosyzygy} ideal is an ideal contained in ${^{\perp}}\mcJ$ that contains for every $B \in \mcA$ a $\mcJ$-cosyzygy of $B.$ The proof of Theorem~\ref{T:finite intersection} indicates that if $\mcJ_1$ and $\mcJ_2$ are special preenveloping ideals, then ${^{\perp}}\mcJ_1 + {^{\perp}}\mcJ_2$ is $\mcJ_1 \cap \mcJ_2$-cosyzygy ideal. Now the dual of Theorem 6.4 of~\cite{FH} implies that if $\mcK$ is a $\mcJ$-cosyzygy ideal, then $\mcEproj \diamond \mcK = {^{\perp}}\mcJ,$ whence the following corollary.

\begin{corollary} \label{C:finite sum}
If $(\mcI_1, \mcI_1^{\perp})$ and $(\mcI_2, \mcI_2^{\perp})$ are complete cotorsion pairs in an exact category $(\mcA; \mcE)$ with enough projective morphisms and enough injective morphisms, then so is $$(\mcEproj \diamond (\mcI_1 + \mcI_2), \mcI_1^{\perp} \cap \mcI_2^{\perp}).$$
\end{corollary}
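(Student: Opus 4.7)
The plan is to combine Theorem~\ref{T:finite intersection} with Salce's Lemma and the cosyzygy criterion recalled just above the statement. Write $\mcJ_i := \mcI_i^{\perp}$ for $i = 1, 2$. Since each cotorsion pair $(\mcI_i, \mcJ_i)$ is complete and $(\mcA; \mcE)$ has enough projective and injective morphisms, Salce's Lemma ensures that $\mcJ_1$ and $\mcJ_2$ are special preenveloping. Theorem~\ref{T:finite intersection} then yields that $\mcJ_1 \cap \mcJ_2 = \mcI_1^{\perp} \cap \mcI_2^{\perp}$ is special preenveloping, and inspection of its proof shows that for each $B \in \mcA$ the pushout $m^1 \amalg_B m^2 : B \to C^1 \amalg_B C^2$ is a special $(\mcJ_1 \cap \mcJ_2)$-preenvelope whose associated cosyzygy has the form $a^1 \oplus a^2$ with $a^i \in {^{\perp}}\mcJ_i = \mcI_i$.

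Next I verify that $\mcI_1 + \mcI_2$ is a $(\mcJ_1 \cap \mcJ_2)$-cosyzygy ideal. The containment $\mcI_1 + \mcI_2 \subseteq {^{\perp}}(\mcJ_1 \cap \mcJ_2)$ is immediate, since any morphism left $\Ext$-orthogonal to $\mcJ_i$ is a fortiori left $\Ext$-orthogonal to the smaller class $\mcJ_1 \cap \mcJ_2$. The identity
$$a^1 \oplus a^2 = \left(\begin{array}{c} 1 \\ 0 \end{array}\right) a^1 \, (1, 0) + \left(\begin{array}{c} 0 \\ 1 \end{array}\right) a^2 \, (0, 1)$$
recorded earlier in the paper places the cosyzygy constructed in the previous paragraph into $\mcI_1 + \mcI_2$, so a cosyzygy of every object of $\mcA$ is produced inside the sum ideal. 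The dual of~\cite[Theorem 6.4]{FH}, which asserts that $\mcEproj \diamond \mcK = {^{\perp}}\mcJ$ for any $\mcJ$-cosyzygy ideal $\mcK$, then delivers the equality
$$\mcEproj \diamond (\mcI_1 + \mcI_2) = {^{\perp}}(\mcJ_1 \cap \mcJ_2) = {^{\perp}}(\mcI_1^{\perp} \cap \mcI_2^{\perp}).$$

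This establishes the left-orthogonality identity of an ideal cotorsion pair. The companion identity $(\mcEproj \diamond (\mcI_1 + \mcI_2))^{\perp} = \mcI_1^{\perp} \cap \mcI_2^{\perp}$ follows by applying $(\cdot)^{\perp}$ to the equality above and using the standard Galois identity $({^{\perp}}(\mcM^{\perp}))^{\perp} = \mcM^{\perp}$, keeping in mind that $\mcI_1^{\perp} \cap \mcI_2^{\perp}$ is right closed as an intersection of right closed ideals. Completeness is then automatic from Salce's Lemma applied in the reverse direction, because the right-hand ideal has already been shown to be special preenveloping.

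The step I expect to be the main obstacle is the second: ensuring that $\mcI_1 + \mcI_2$ actually contains a cosyzygy of each object, as opposed to merely sitting inside ${^{\perp}}(\mcJ_1 \cap \mcJ_2)$. The orthogonality containment is purely formal, but producing the cosyzygy requires extracting the direct-sum structure $a^1 \oplus a^2$ from the specific pushout construction in the proof of Theorem~\ref{T:finite intersection}, after which the displayed identity is what lands it in $\mcI_1 + \mcI_2$. Once this is in place, Proposition~\ref{P:left closed} handles the inclusion $\mcEproj \diamond (\mcI_1 + \mcI_2) \subseteq {^{\perp}}(\mcI_1^{\perp} \cap \mcI_2^{\perp})$ and the cosyzygy theorem supplies the reverse inclusion, closing the argument.
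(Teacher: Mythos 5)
Your proposal is correct and follows essentially the same route as the paper: Salce's Lemma plus Theorem~\ref{T:finite intersection} to make $\mcI_1^{\perp}\cap\mcI_2^{\perp}$ special preenveloping with cosyzygy $a^1\oplus a^2\in\mcI_1+\mcI_2$, so that $\mcI_1+\mcI_2$ is a $(\mcI_1^{\perp}\cap\mcI_2^{\perp})$-cosyzygy ideal, and then the dual of~\cite[Theorem 6.4]{FH} to identify the left-hand ideal as $\mcEproj\diamond(\mcI_1+\mcI_2)$. The only additions are your explicit verifications (the matrix identity placing $a^1\oplus a^2$ in the sum ideal, and the right-closedness of the intersection), which the paper leaves implicit.
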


The hypotheses of Corollary~\ref{C:finite sum} are self dual, so if $({^{\perp}}\mcJ_1, \mcJ_1)$ and $({^{\perp}}\mcJ_2, \mcJ_2)$ are complete cotorsion pairs in $(\mcA; \mcE),$ then so is $({^{\perp}}\mcJ_1 \cap {^{\perp}}\mcJ_2, (\mcJ_1 + \mcJ_2) \diamond \mcEinj).$ We do not know if the sum of special precovering ideals, (resp., special preenveloping ideals), is already special precovering (resp., special preenveloping).

\section{The Infinite case}

In this section we make explicit some of the consequences of Grothendieck's Axiom AB4. In the context of exact categories, this axiom is equivalent is the existence of pushouts of arbitrary families of inflations.

\subsection{Definition} An exact category $(\mcA; \mcE)$ is said to {\em has exact coproducts} if coproducts exist in $\mcA$ and are exact. This means that if $\Xi^i \colon \xymatrix@1{B^i \ar[r]^{m^i} & C^i \ar[r]^{p^i} & A^i,}$ $i \in I,$ is a set of conflations in $(\mcA; \mcE),$ then the coproduct
$$\amalg_i \; \Xi^i \colon \xymatrix@1@C=45pt{\amalg_i \, B^i \ar[r]^{{\small \amalg_i} m^i} & \amalg_i \, A^i \ar[r]^{{\small \amalg_i} p^i} & \amalg_i \, C^i}$$
is an inflation in $\mcE.$ A coproduct of $I$-many copies of an object $B$ is denoted by $B^{(I)}.$ The following contains the key idea behind what follows; the proof is the same as that of Lemma~\ref{L:key}.

\begin{lemma}  \label{L:infinite pushouts}
Let $m^i : B \to C^i,$ $i \in I,$ be a set of inflations with common domain $B.$ If $(\mcA; \mcE)$ has exact coproducts, then the pushout $m = {\small \amalg_B} \; m^i  \colon B \to \amalg_B \, C^i$
exists in $(\mcA; \mcE)$ and is given by the inflation that occurs in the pushout
$$\xymatrix@C=55pt@R=30pt{
B^{(I)} \ar[r]^{{\small \amalg_i} m^i} \ar[d]^s & \amalg_i \, C^i   \ar[r] \ar[d] & \amalg_i \, A^i \ar@{=}[d] \\
B \ar[r]^{{\small \amalg_B} \, m^i} & \; \amalg_B \, C^i \ar[r] & \; \amalg_i \, A^i
}$$
of the coproduct of conflations along the sum morphism.
\end{lemma}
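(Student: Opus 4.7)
The plan is to imitate the proof of Lemma~\ref{L:key} verbatim, replacing the binary direct sum by the $I$-indexed coproduct. First, extend each inflation $m^i \colon B \to C^i$ to a conflation $\Xi^i \colon B \to C^i \to A^i$ in $\mcE$. By the hypothesis of exact coproducts, the coproduct $\amalg_i \Xi^i$ is again a conflation in $\mcE$, so in particular $\amalg_i m^i \colon B^{(I)} \to \amalg_i C^i$ is an inflation with cokernel $\amalg_i A^i$. Next, apply Axiom~$E_2$ to push out this inflation along the sum morphism $s \colon B^{(I)} \to B$; this produces an inflation $B \to P$ in $(\mcA; \mcE)$, whose cokernel remains $\amalg_i A^i$, giving the bottom conflation $B \to P \to \amalg_i A^i$ shown in the statement.

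It then remains to identify $P$ with the pushout $\amalg_B C^i$ of the family $(m^i)_{i \in I}$, and the inflation $B \to P$ with the canonical morphism $\amalg_B m^i$. This is a formal comparison of universal properties: a morphism from $\amalg_B C^i$ to an object $X \in \mcA$ is the same thing as a family $(f^i \colon C^i \to X)_{i \in I}$ for which all composites $f^i m^i \colon B \to X$ coincide; by the universal property of the coproduct $\amalg_i C^i$ this is equivalent to a single morphism $f \colon \amalg_i C^i \to X$ such that $f \circ (\amalg_i m^i)$ factors through $s$. By the universal property of the pushout of $\amalg_i m^i$ along $s$, such data corresponds bijectively to morphisms $P \to X$, and tracing the correspondence identifies the canonical inflation $B \to P$ with $\amalg_B m^i$.

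The only non-bookkeeping obstacle is this last identification of universal properties; once it is made, the rest of the argument is a direct transcription of Lemma~\ref{L:key}. No technique is needed beyond the closure of $\mcE$ under $I$-indexed coproducts and Axiom~$E_2$, which is precisely what the hypothesis supplies.
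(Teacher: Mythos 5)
Your proposal is correct and follows the same route the paper intends: the paper explicitly says the proof is ``the same as that of Lemma~\ref{L:key},'' namely push out the coproduct conflation $\amalg_i \Xi^i$ (a conflation by exactness of coproducts) along the sum morphism $s$ via Axiom $E_2$, and identify the result with the wide pushout $\amalg_B C^i$ by comparing universal properties exactly as you do. Your only added content is spelling out that cocones under the family $(m^i)_{i\in I}$ correspond to cocones under the span $(s, \amalg_i m^i)$, which is the routine verification the paper leaves implicit.
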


Because the domain conflation in the pushout of Lemma~\ref{L:infinite pushouts} is a coproduct, this morphism of conflations is determined by the restrictions to its components,
$$\xymatrix{B \ar[r]^{m^i} \ar[d]^(.4){b^i} & C^i \ar[r]^{p^i} \ar[d]^(.4){c^i} & A^i \ar[d]^(.4){a^i} \\
\; B^{(I)} \ar[r]^{\amalg \, m^i} \ar[d]^s & \amalg_i \, C^i \ar[r]^{\amalg \, p^i} \ar[d] & \amalg_i \, A^i \ar@{=}[d] \\
B \ar[r]^(.4)m & \amalg_B \, C^i \ar[r] & \amalg_i \, A^i,
}$$
where $b^i \colon B^i \to \amalg_i \, B^i,$ $c^i \colon C^i \to \amalg_i \, C^i$ and $a^i \colon A^i \to \amalg_i \, A^i$ are the structural inflations of the coproducts. Notice that $sb^i = 1_B,$ so that composition yields the pullback
$$\xymatrix{B \ar[r]^{m^i} \ar@{=}[d] & C^i \ar[r]^{p^i} \ar[d] & A^i \ar[d]^(.4){a^i} \\
B \ar[r]^(.4)m & \amalg_B \, C^i \ar[r] & \amalg_i \, A^i.
}$$

\subsection{The canonical morphism} Let $A^i,$ $i \in I,$ and $B$ be objects in an exact category $(\mcA; \mcE).$ Define the canonical morphism of abelian groups
\begin{equation} \label{Eq:canonical morphism}
\xymatrix@1@C=30pt{\Ext (\amalg_i \, A^i, B) \ar[r] & \prod_i \Ext (A^i,B),} \;\; \xi \mapsto (\xi^i)_{i \in I},
\end{equation}
so that the $i$-component is obtained by pullback along the structural inflation $a^i : A^i \to \oplus_i \, A^i,$
$$\xymatrix@R=30pt@C=30pt{\xi^i : B \ar[r]^-{m^i} \ar@{=}@<2ex>[d] & C^i \ar[r]^{d^i} \ar[d] & A^i \ar[d]^{a^i} \\
\xi : B \ar[r]^m & C \ar[r] & \amalg_i \, A^i.
}$$

\begin{proposition} \label{P:canonical morphism}
If $(\mcA; \mcE)$ has exact coproducts, then the canonical morphism~{\rm (\ref{Eq:canonical morphism})} is an isomorphism.
\end{proposition}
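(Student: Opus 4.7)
The plan is to construct an explicit two-sided inverse $\Psi$ to the canonical morphism $\Phi$ of~(\ref{Eq:canonical morphism}), using the pushout construction of Lemma~\ref{L:infinite pushouts}. Given a family $(\xi^i)$ of conflations $\xi^i \colon \xymatrix@1{B \ar[r]^{m^i} & C^i \ar[r]^{p^i} & A^i,}$ I would define $\Psi((\xi^i))$ to be the bottom row $\xymatrix@1{B \ar[r] & \amalg_B C^i \ar[r] & \amalg_i A^i}$ produced by that lemma. Since $(\mcA; \mcE)$ has exact coproducts, the top row of the diagram is a conflation, and its pushout along the sum morphism $s$ is itself a conflation by Axiom $E_2.$ A routine check that equivalent families yield equivalent pushout conflations shows that $\Psi$ is well-defined on equivalence classes.

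The identity $\Phi \circ \Psi = \id$ is immediate from the observation just after Lemma~\ref{L:infinite pushouts}: pulling back the pushout conflation along the structural inflation $a^i \colon A^i \to \amalg_i A^i$ recovers $\xi^i$ exactly. For $\Psi \circ \Phi = \id,$ start with a conflation $\xi \colon \xymatrix@1{B \ar[r]^m & C \ar[r]^p & \amalg_i A^i,}$ let $\xi^i$ denote its pullback along $a^i,$ and write $\phi^i \colon C^i \to C$ for the canonical morphism from the pullback square, which satisfies $\phi^i m^i = m$ and $p \, \phi^i = a^i p^i.$ The family $(\phi^i)$ assembles into a morphism $g \colon \amalg_i C^i \to C,$ and the identities $\phi^i m^i = m$ say precisely that $g \circ (\amalg_i m^i)$ and $m \circ s$ agree on each summand of $B^{(I)},$ hence coincide. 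The universal property of the pushout in Lemma~\ref{L:infinite pushouts} then yields a unique $h \colon \amalg_B C^i \to C$ fitting into a morphism of conflations
$$\xymatrix@C=30pt@R=30pt{
B \ar[r] \ar@{=}[d] & \amalg_B C^i \ar[r] \ar[d]^h & \amalg_i A^i \ar@{=}[d] \\
B \ar[r]^m & C \ar[r]^p & \amalg_i A^i
}$$
from $\Psi(\Phi(\xi))$ to $\xi,$ and the short five lemma for exact categories~\cite[Corollary 3.2]{B} forces $h$ to be an isomorphism.

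The main technical nuisance I anticipate is this pushout compatibility check; beyond that, the argument is formal. Additivity of $\Psi$ comes for free, since $\Phi$ is visibly an additive group homomorphism (pullback being additive in each variable) and $\Psi$ is set-theoretically inverse to it.
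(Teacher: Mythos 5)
Your proposal is correct and is essentially the paper's own argument: the paper simply declares that the inverse of the canonical morphism sends $(\xi^i)_{i \in I}$ to the conflation whose inflation is the pushout of the $m^i$ from Lemma~\ref{L:infinite pushouts}, and you have carried out the verification of the two composite identities (via the pullback observation following that lemma, and via the universal property of the pushout together with the short five lemma) that the paper leaves implicit.
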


\begin{proof}
By Lemma~\ref{L:infinite pushouts}, the inverse of this map is defined by associating to $(\xi^i)_{i \in I}$ the conflation whose inflation is the pushout of the $m^i,$ $i \in I.$
\end{proof}

\subsection{Coproducts in the $\ME$-exact category} The canonical morphism~(\ref{Eq:canonical morphism}) is defined by taking pullbacks, so if we are working in the category $(\Arr (\mcA); \Arr (\mcE)),$ it respects $\ME$-conflations of arrows,
$$\xymatrix@C=30pt{\Ext_{\Arr (\mcA)} (\amalg_i \, a^i, b) \ar[r] & \prod_i \Ext_{\Arr (\mcA)} (a^i, b)  \\
\LExt_{\mcA} (\amalg_i \, a^i, b) \ar[r] \ar@{^{(}->}[u] & \prod_i \LExt_{\mcA} (a^i, b). \ar@{^{(}->}[u]
}$$
It $(\mcA; \mcE)$ has exact coproducts, then so does $(\Arr (\mcA); \Arr (\mcE))$ and and the functor $A \mapsto 1_A$ respects coproducts. Proposition~\ref{P:canonical morphism} then implies that the canonical morphism on top is an isomorphism. According to the following proposition, the $\ME$-exact category of arrows
also has exact coproducts, so that the canonical morphism on the bottom is also an isomorphism.

\begin{proposition}   \label{P:ME coproducts}
If $(\mcA; \mcE)$ has exact coproducts, then a coproduct of $\ME$-conflations in \linebreak $(\Arr (\mcA); \Arr (\mcE))$ is an $\ME$-conflation.
\end{proposition}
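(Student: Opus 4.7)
The plan is to coproduct the ME-factorizations componentwise. For each $i \in I,$ an ME-conflation $\xi^i \colon \xymatrix@1{b^i \ar[r] & c^i \ar[r] & a^i}$ in $(\Arr (\mcA); \Arr (\mcE))$ admits, by definition, a factorization consisting of three conflations in $(\mcA; \mcE),$
$$\xymatrix@C=25pt@R=25pt{
\Xi_0^i \colon B_0^i \ar[r] \ar@{=}[d] & C_0^i \ar[r] \ar[d]^{c_1^i} & A_0^i \ar[d]^{a^i} \\
\Xi^i \colon B_0^i \ar[r] \ar[d]^{b^i} & C^i \ar[r] \ar[d]^{c_2^i} & A_1^i \ar@{=}[d] \\
\Xi_1^i \colon B_1^i \ar[r] & C_1^i \ar[r] & A_1^i,
}$$
with $c^i = c_2^i c_1^i.$ First I would form the three coproducts $\amalg_i \, \Xi_0^i,$ $\amalg_i \, \Xi^i,$ and $\amalg_i \, \Xi_1^i;$ the hypothesis of exact coproducts in $(\mcA; \mcE)$ ensures that each of these is itself a conflation in $(\mcA; \mcE).$

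Next I would observe that the vertical morphisms of conflations in the displayed diagram, namely $\Xi_0^i \to \Xi^i$ and $\Xi^i \to \Xi_1^i,$ can be coproducted componentwise to produce morphisms of conflations $\amalg_i \, \Xi_0^i \to \amalg_i \, \Xi^i$ and $\amalg_i \, \Xi^i \to \amalg_i \, \Xi_1^i.$ This is just the functoriality of the coproduct: commuting squares coproduct to commuting squares. The resulting three-tier diagram has top row $\amalg_i \, \Xi_0^i,$ middle row $\amalg_i \, \Xi^i,$ and bottom row $\amalg_i \, \Xi_1^i,$ with the right-hand vertical composite equal to $\amalg_i \, a^i$ (identity on top) and the left-hand vertical composite equal to $\amalg_i \, b^i$ (identity on bottom).

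To conclude, I would identify this candidate factorization with an ME-factorization of $\amalg_i \, \xi^i.$ By the definition of coproducts in $\Arr (\mcA),$ the coproduct conflation $\amalg_i \, \xi^i$ has domain arrow $\amalg_i \, b^i,$ codomain arrow $\amalg_i \, a^i,$ and middle arrow $\amalg_i \, c^i.$ Since coproducts commute with composition, the middle column of the candidate diagram composes to $(\amalg_i \, c_2^i)(\amalg_i \, c_1^i) = \amalg_i \, (c_2^i c_1^i) = \amalg_i \, c^i,$ which is exactly what is required for the ME-condition. Hence $\amalg_i \, \xi^i$ is ME, as witnessed by the middle conflation $\amalg_i \, \Xi^i.$

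There is no genuine obstacle here; the entire argument rests on the fact that coproducts of conflations are conflations (the exact coproduct hypothesis) and that all the structural operations — composition, formation of morphisms of conflations, and passage to the arrow category — commute with coproducts because they are either colimits or componentwise operations. The only point that requires a moment's care is matching the data of the componentwise coproduct of morphisms of arrows with the morphisms of the coproduct arrow, which is immediate from the universal property of the coproduct in $\Arr (\mcA).$
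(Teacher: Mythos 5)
Your proof is correct and follows the same route as the paper: coproduct the three tiers of the componentwise $\ME$-factorizations and use exactness of coproducts to see that each tier remains a conflation, with $(\amalg_i \, c_2^i)(\amalg_i \, c_1^i) = \amalg_i \, c^i$ giving the required factorization. The paper's proof simply displays the coproducted diagram without the extra commentary.
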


\begin{proof}
Let $\xi^i: \xymatrix@1@C=15pt{b^i \ar[r] & c^i \ar[r] & a^i,}$ $i \in I,$ be a set of $\ME$-conflations, with respective $\ME$ factorizations
$$\xymatrix@C=30pt@R=30pt{
B^i_0 \ar[r] \ar@{=}[d] & C^i_0 \ar[r] \ar[d]^{c^i_1} & A^i_0 \ar[d]^{a^i} \\
B^i_0 \ar[r] \ar[d]^{b^i} & C^i \ar[r] \ar[d]^{c^i_2} & A^i_1 \ar@{=}[d]  \\
B^i_1 \ar[r] & C^i_1 \ar[r] & A^i_1.
}$$
Then the coproduct $\amalg_i \, \xi^i: \xymatrix@1@C=15pt{\amalg_i \, b^i \ar[r] & \amalg_i \, c^i \ar[r] & \amalg_i \, a^i}$ has the $\ME$ factorization
$$\xymatrix@C=30pt@R=30pt{
\amalg_i \, B^i_0 \ar[r] \ar@{=}[d] & \amalg_i \, C^i_0 \ar[r] \ar[d]^{ \amalg_i c^i_1} &  \amalg_i \, A^i_0 \ar[d]^{ \amalg_i a^i} \\
 \amalg_i \, B^i_0 \ar[r] \ar[d]^{ \amalg_i b^i} &  \amalg_i \, C^i \ar[r] \ar[d]^{ \amalg_i c^i_2} &  \amalg_i \,  A^i_1 \ar@{=}[d]  \\
 \amalg_i \, B^i_1 \ar[r] &  \amalg_i \, C^i_1 \ar[r] &  \amalg_i \, A^i_1.
}$$
\end{proof}

\subsection{Left closed ideals} In this subsection, we show that the left closure of a set of morphisms is the left closure of the single morphism given by the coproduct of these morphisms.

\begin{lemma} \label{L:left closed}
A left closed ideal $\mcI$ in an exact category $(\mcA; \mcE)$ with exact coproducts is closed under coproducts of morphisms.
\end{lemma}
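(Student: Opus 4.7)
The plan is to unpack the definition of a left closed ideal and then apply the canonical isomorphism of Proposition~\ref{P:canonical morphism} componentwise. Since $\mcI$ is left closed, I would write $\mcI = {^{\perp}}\mcM$ for some class $\mcM$ of morphisms (for example, $\mcM = \mcI^{\perp}$). Given a family $\{i^k \colon A_0^k \to A_1^k\}_{k \in K}$ of morphisms in $\mcI$ and an arbitrary $m \colon B_0 \to B_1$ in $\mcM,$ the goal reduces to verifying $\Ext_{\mcA}(\amalg_k\, i^k, m) = 0.$ By hypothesis, $\Ext_{\mcA}(i^k, m) = 0$ for every $k.$

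Next, I would apply Proposition~\ref{P:canonical morphism} twice---once with second argument $B_0$ and once with $B_1$---to obtain canonical isomorphisms
$$\Ext_{\mcA}(\amalg_k\, A_1^k, B_0) \cong \prod_k \Ext_{\mcA}(A_1^k, B_0), \qquad \Ext_{\mcA}(\amalg_k\, A_0^k, B_1) \cong \prod_k \Ext_{\mcA}(A_0^k, B_1),$$
each given by pullback along the structural inflations $a_j^k \colon A_j^k \to \amalg_k \, A_j^k.$ Naturality of $\Ext$ in both variables, combined with the defining relations $(\amalg_k\, i^k) \circ a_0^k = a_1^k \circ i^k$ for every $k,$ implies that under these identifications the map $\Ext_{\mcA}(\amalg_k\, i^k, m)$ decomposes componentwise as the product $\prod_k \Ext_{\mcA}(i^k, m).$ Each factor vanishes by hypothesis, so the product vanishes, and therefore $\amalg_k\, i^k$ lies in ${^{\perp}}\mcM = \mcI.$

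The main obstacle is purely bookkeeping: verifying, via a short diagram chase with the structural inflations, that the canonical isomorphisms of Proposition~\ref{P:canonical morphism} intertwine $\Ext_{\mcA}(\amalg_k\, i^k, m)$ with the product $\prod_k \Ext_{\mcA}(i^k, m).$ Because Proposition~\ref{P:canonical morphism} already carries the exact-category content of coproducts of conflations, no additional axiom of $(\mcA; \mcE)$ need be invoked at this stage, and the argument is symmetric enough that the dual statement (closure of right closed ideals under products of morphisms, when $(\mcA;\mcE)$ has exact products) will follow by reversing arrows throughout.
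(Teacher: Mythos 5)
Your proposal is correct and follows essentially the same route as the paper: fix $m$ in a class $\mcM$ with $\mcI = {^{\perp}}\mcM,$ use the canonical isomorphism of Proposition~\ref{P:canonical morphism} on both $\Ext(\amalg_k A_1^k, B_0)$ and $\Ext(\amalg_k A_0^k, B_1),$ and check via the structural inflations that $\Ext(\amalg_k i^k, m)$ is identified with $\prod_k \Ext(i^k, m) = 0.$ The paper phrases this last step as $\Ext(\amalg_j a^j, b)$ being the unique map compatible with all the componentwise squares, which is exactly the naturality bookkeeping you describe.
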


\begin{proof}
It must be shown that if $\Ext (a^j, b) = 0$ for all $j \in J,$ then $\Ext (\amalg_j \, a^j, b) = 0.$ We have that $b : B_0 \to B_1$ and $a^j : A^j_0 \to A^j_1,$ so that
$\Ext (\amalg_j \, a^j, b): \Ext (\amalg_j \, A^j_1, B_0) \to \Ext (\amalg_j \, A^j_0, B_1).$ Now use the canonical isomorphism~(\ref{Eq:canonical morphism}) to see that $\Ext (\amalg_j \, a^j, b)$ is the unique simultaneous solution for the horizontal arrow in the middle of all the commutative diagrams
$$\xymatrix@C=30pt@R=30pt{
\Ext (\amalg_j \, A^j_1, B_0) \ar[rr]^{\Ext (\amalg_j \, a^j, b)} \ar[d] && \Ext (\amalg_j \, A^j_0, B_1) \ar[d] \\
\prod_j \, \Ext (A^j_1, B_0) \ar[rr] \ar[d]^{\pi^j} && \prod_j \Ext (A^j_0, B_1) \ar[d]^{\pi^j} \\
\Ext (A^j_1, B_0) \ar[rr]^{\Ext (a^j, b)} && \Ext (A^j_0, B_1),
}$$
with $j \in J.$
\end{proof}

The proof of Lemma~\ref{L:left closed} implies that if $\mcI$ is a small ideal, generated by a set of morphisms $m^i,$ $i \in I,$ then $\mcI^{\perp} \subseteq m^{\perp},$ where $m = \amalg_i \, m^i.$ On the other hand, we showed earlier
that $\mcI \subseteq \langle m \rangle,$ so that $m^{\perp} \subseteq \mcI^{\perp}.$ The equality $m^{\perp} = \mcI^{\perp}$ implies that the left closure of $\mcI$ is given by the left closure ${^{\perp}}(m^{\perp})$ of $m.$

\subsection{Intersections of special preenveloping ideals} To generalize Theorem~\ref{T:finite intersection} to the infinite case, we have to refer to the intersection of a set $\mcJ_i,$ $i \in I,$ of special preenveloping ideals. As mentioned earlier, these ideals are, in general, proper classes, so in order for the intersection to be a class, there must be some uniform expression of the indexing $i \mapsto \mcJ_i.$ This is an underlying assumption that we make. We also freely abuse the language to refer to a set of ideals, when we really mean a collection of ideals uniformly indexed by a set.

\begin{theorem} \label{T:infinite intersection}
Let $\{ \mcJ_i \: | \:  i \in I \}$ be a set of special preenveloping ideals in an exact category $(\mcA; \mcE)$ with exact coproducts. Then the intersection ideal $\mcJ = \bigcap_{i \in I} \; \mcJ_i$ is also special preenveloping.
\end{theorem}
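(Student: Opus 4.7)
The plan is to imitate the proof of Theorem~\ref{T:finite intersection}, replacing the pushout of two inflations in the arrow category with the pushout of the $I$-indexed family of inflations made available by Lemma~\ref{L:infinite pushouts}. The exact coproducts hypothesis on $(\mcA; \mcE)$ is what allows that lemma to be applied inside the arrow category as well: as noted after Proposition~\ref{P:canonical morphism}, the exact category $(\Arr(\mcA); \Arr(\mcE))$ also has exact coproducts, and the embedding $A \mapsto 1_A$ preserves them.

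First, for each $i \in I$, choose a special $\mcJ_i$-preenvelope $m^i \colon B \to C^i$ of $B$. As explained right before Theorem~\ref{T:finite intersection}, the data of $m^i$ is equivalent to a conflation of arrows
$$\xi^i \colon \xymatrix@1{1_B \ar[r] & c^i \ar[r] & a^i}$$
in $(\Arr(\mcA); \Arr(\mcE))$ with $a^i \in {^{\perp}}\mcJ_i$, in which $m^i$ appears as the domain morphism of the inflation $1_B \to c^i$. Apply Lemma~\ref{L:infinite pushouts} inside $(\Arr(\mcA); \Arr(\mcE))$ to the family $\{1_B \to c^i\}_{i \in I}$ of inflations with common domain $1_B$: the resulting pushout is an inflation of arrows that fits into a conflation
$$\xymatrix@1{1_B \ar[r] & \amalg_{1_B} c^i \ar[r] & \amalg_i \, a^i}$$
with the same domain $1_B$. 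Reading off domain components in $\mcA$, this witnesses that the pushout $m = \amalg_B m^i \colon B \to \amalg_B C^i$ of the family $\{m^i\}_{i \in I}$ in $\mcA$ is the domain morphism of an inflation belonging to a conflation of arrows of the form $1_B \to c \to a$, with $a = \amalg_i \, a^i$.

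Two verifications then finish the argument. First, $m$ lies in $\mcJ := \bigcap_i \mcJ_i$: for each $i$, the structural morphism $\pi^i \colon C^i \to \amalg_B C^i$ into the pushout satisfies $m = \pi^i m^i$, and since $m^i \in \mcJ_i$ and $\mcJ_i$ is an ideal, $m \in \mcJ_i$; intersecting over $i$ gives $m \in \mcJ$. Second, $a = \amalg_i \, a^i$ lies in ${^{\perp}}\mcJ$: given any $j \in \mcJ$, we have $j \in \mcJ_i$ and hence $\Ext(a^i, j) = 0$ for every $i$, so by the canonical isomorphism of Proposition~\ref{P:canonical morphism} --- exactly the calculation performed in the proof of Lemma~\ref{L:left closed} --- $\Ext(\amalg_i \, a^i, j) = 0$. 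Therefore $m$ satisfies the definition of a special $\mcJ$-preenvelope of $B$.

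The main conceptual burden is bookkeeping rather than new mathematics: one must check that the infinite pushout constructed in the arrow category genuinely translates, via the domain-component projection, into a special preenvelope for the intersection ideal. The exact coproducts axiom together with Proposition~\ref{P:canonical morphism} do the heavy lifting, converting the coproduct on the contravariant side of $\Ext$ into the product of the individual vanishing conditions on the covariant side, which is precisely what is needed to see that $\amalg_i \, a^i$ is left $\Ext$-orthogonal to every morphism lying in every $\mcJ_i$.
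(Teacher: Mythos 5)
Your proposal is correct and follows essentially the same route as the paper: apply Lemma~\ref{L:infinite pushouts} in the arrow category $(\Arr(\mcA);\Arr(\mcE))$ to the family of inflations $1_B \to c^i$, observe that the domain morphism of the resulting pushout factors through each $m^i$ and hence lies in every $\mcJ_i$, and conclude that $\amalg_i\, a^i \in {^{\perp}}\mcJ$ via the canonical isomorphism of Proposition~\ref{P:canonical morphism} --- which is precisely the content of Lemma~\ref{L:left closed}, the result the paper cites at this step.
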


\begin{proof}
Let $B \in \mcA$ and suppose that a special $\mcJ_i$-preenvelope $\xymatrix@1@C=20pt{1_B \ar[r]^{m^i} & c^i \ar[r] & a^i,}$ $i \in I,$ is given. This means that the domain morphism $m^i_0 : B \to C_0^i$ belongs to $\mcJ_i$ and that $a_i \in {^{\perp}}\mcJ_i \subseteq {^{\perp}}(\bigcap_{i \in I} \; \mcJ_i).$  By Lemma~\ref{L:infinite pushouts} applied to the arrow category $(\Arr (\mcA); \Arr (\mcE)),$ the pushout of the inflations $m^i$ exists and is the inflation of the bottom conflation of
$$\xymatrix@C=30pt@R=30pt{
1_B \ar[r]^{m^i} \ar@{=}[d] & c^i \ar[r]  \ar[d]^(.45){h^i} & a^i \ar[d] \\
1_B \ar[r]^(.4)m & \amalg_B \, c^i \ar[r] & \oplus_i \, a^i.
}$$
The domain morphism of $m$ is a composition $m_0 = h_0^i m_0^i \in \mcJ_i,$ so that $m \in \bigcap_{i \in I} \; \mcJ_i,$ and $\amalg_i a^i \in{^{\perp}}(\bigcap_{i \in I} \; \mcJ_i),$ by Lemma~\ref{L:left closed}.
\end{proof}

Of course, if $(\mcA; \mcE)$ has exact products, then one can argue dually and use the existence of infinite pullbacks to prove that if $\mcI^j,$ $j \in J,$ is a set of special precovering ideals, then so is the intersection $\bigcap_j \mcI^j.$ This is analogous to the result~\cite[Proposition 3.1]{C} of J.D.\ Christensen that projective classes in a triangulated category are closed under intersection. 

\begin{corollary} \label{C:infinite sum}
Let $(\mcA; \mcE)$ be an exact category with exact coproducts, and enough injective morphism and projective morphisms. If $(\mcI_j, \mcI^{\perp}_j),$ $j \in J,$ is a set of complete cotorsion pairs in $(\mcA; \mcE),$ then the cotorsion pair $({^{\perp}}\mcJ, \mcJ),$ where $\mcJ = \bigcap_{j \in J} \; \mcI_j^{\perp},$ is complete. Moreover, ${^{\perp}}\mcJ$ is the left closure of $\sum_j \, \mcI_j.$
\end{corollary}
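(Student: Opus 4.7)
The proof assembles Theorem~\ref{T:infinite intersection} with the ideal Salce Lemma and a direct computation of the right orthogonal of a sum of ideals. The plan is to reduce the completeness assertion to Theorem~\ref{T:infinite intersection}, and then to identify ${}^\perp\mcJ$ with the left closure of $\sum_j \mcI_j$ by showing that the two ideals have the same right orthogonal.

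First, since each $(\mcI_j, \mcI_j^\perp)$ is a complete cotorsion pair in $(\mcA; \mcE)$, the ideal version of Salce's Lemma tells us that each $\mcI_j^\perp$ is special preenveloping. The ambient category has exact coproducts, so Theorem~\ref{T:infinite intersection} applies and yields that $\mcJ = \bigcap_{j \in J}\,\mcI_j^\perp$ is itself special preenveloping. By construction, $({}^\perp\mcJ, \mcJ)$ is an ideal cotorsion pair, and because $(\mcA; \mcE)$ has enough projective and enough injective morphisms, the ideal Salce Lemma applies in the other direction to conclude that $({}^\perp\mcJ,\mcJ)$ is complete.

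For the second assertion, I would first verify the identity
$$\Bigl(\sum_{j \in J} \, \mcI_j\Bigr)^{\!\perp} \;=\; \bigcap_{j \in J}\, \mcI_j^\perp \;=\; \mcJ.$$
The inclusion $\supseteq$ is immediate from additivity of $\Ext$ in the first variable: a finite $\mcA$-linear combination $\sum_k f_k a_{j_k} g_k$ of morphisms $a_{j_k} \in \mcI_{j_k}$ is $\Ext$-orthogonal to any $m \in \bigcap_j \mcI_j^\perp$, because each summand is. The inclusion $\subseteq$ is trivial since $\mcI_j \subseteq \sum_j \mcI_j$ for every $j$. Taking left orthogonals of both sides of the displayed identity gives ${}^\perp\mcJ = {}^\perp\bigl((\sum_j \mcI_j)^\perp\bigr)$, which is by definition the left closure of $\sum_j \mcI_j$.

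There is no real obstacle here; the essential work has already been done in Theorem~\ref{T:infinite intersection} (which uses exact coproducts through Lemma~\ref{L:infinite pushouts} and Lemma~\ref{L:left closed}) and in the ideal Salce Lemma. The only point that deserves a line of care is the additivity computation identifying $(\sum_j \mcI_j)^\perp$ with $\bigcap_j \mcI_j^\perp$, since the sum $\sum_j \mcI_j$ consists not of arbitrary sums but of finite $\mcA$-linear combinations of morphisms from the $\mcI_j$, so one must invoke additivity of $\Ext(-,m)$ together with the ideal property that $\mcI_j^\perp$ absorbs pre- and post-composition.
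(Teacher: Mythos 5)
Your proof is correct, and the first claim is handled exactly as in the paper: completeness gives that each $\mcI_j^{\perp}$ is special preenveloping, Theorem~\ref{T:infinite intersection} makes $\mcJ$ special preenveloping, and Salce's Lemma upgrades $({^{\perp}}\mcJ, \mcJ)$ to a complete pair. (One small point worth a sentence: for $({^{\perp}}\mcJ,\mcJ)$ to be a cotorsion pair you need $({^{\perp}}\mcJ)^{\perp}=\mcJ$, i.e.\ that $\mcJ$ is right closed; this is supplied by your own identity $\mcJ = \bigl(\sum_j \mcI_j\bigr)^{\perp}$, so nothing is missing.)

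For the second claim you take a genuinely different route from the paper. The paper argues structurally: ${^{\perp}}\mcJ$ is a left closed ideal containing every $\mcI_j$, hence contains the left closure $\mcI$ of $\sum_j \mcI_j$; conversely $\mcI$, being left closed, is closed under coproducts by Lemma~\ref{L:left closed}, hence contains the cosyzygies $\amalg_i\, a^i$ produced in the proof of Theorem~\ref{T:infinite intersection} and is therefore a $\mcJ$-cosyzygy ideal, so that $\mcI = \mcEproj \diamond \mcI = {^{\perp}}\mcJ$ by Proposition~\ref{P:left closed} and \cite[Theorem 6.4]{FH}. You instead verify the Galois-connection identity $\bigl(\sum_j \mcI_j\bigr)^{\perp} = \bigcap_j \mcI_j^{\perp} = \mcJ$, using additivity of $a \mapsto \Ext(a,m)$ together with the fact that right orthogonals absorb pre- and post-composition, and then apply ${^{\perp}}(-)$ to both sides; since the left closure of $\sum_j \mcI_j$ is by definition ${^{\perp}}\bigl((\sum_j \mcI_j)^{\perp}\bigr)$, the claim follows at once. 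Your argument is shorter and purely formal, and it shows that the ``Moreover'' clause needs none of the hypotheses of the corollary (no exact coproducts, no completeness, no enough projectives or injectives). What the paper's longer route buys is structural information along the way: it exhibits the left closure as $\mcEproj \diamond \mcI$ and identifies it as a cosyzygy ideal of $\mcJ$, in parallel with the explicit description $\mcEproj \diamond (\mcI_1 + \mcI_2)$ in Corollary~\ref{C:finite sum}. Both arguments are valid proofs of the statement as written.
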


\begin{proof}
The first claim follows from Theorem~\ref{T:infinite intersection} and Salce's Lemma. To verify the second claim, note that ${^{\perp}}\mcJ$ is a left closed ideal, containing all the $\mcI_j,$ so it must contain the left closure $\mcI$ of $\sum_j \, \mcI_j.$ On the other hand, $\mcI \subseteq {^{\perp}}\mcJ$ is closed under coproducts, and is therefore a cosysygy ideal of $\mcJ.$ By Proposition~\ref{P:left closed} and~\cite[Theorem 6.4]{FH}, $\mcI = \mcEproj \diamond \mcI = {^{\perp}}\mcJ.$
\end{proof}

\section{The Bongartz-Eklof-Trlifaj Lemma}

The following result is an ideal version of the celebrated Eklof-Trlifaj Lemma~\cite[Theorem 2]{ET}. Its proof, however uses the argument from Bongartz' Lemma~\cite{GT}, where it is required that certain $\Ext$-groups be sets; this is not included in the definition of an exact category. We assume in this section that the exact category
$(\mcA; \mcE)$ has exact coproducts.

\begin{theorem} \label{T:Bongartz} {\rm (The Bongartz-Eklof-Trlifaj (BET) Lemma)}
Let $(\mcA; \mcE)$ be an exact category and suppose that an object $B \in \mcA$ and morphism $a : A_0 \to A_1$ are given, with the property that the $\Ext$-group $I = \Ext_{\Arr (\mcA)} (a, 1_B)$ is a set. A special $a^{\perp}$-preenvelope of $B$ is given by the pushout
$$\xymatrix@C=30pt@R=30pt{
1_B \ar[r]^{m^{\zeta}} \ar@{=}[d] & c^{\zeta} \ar[r]  \ar[d] & a \ar[d] \\
1_B \ar[r]^(.4)m & \amalg_B \, c^{\zeta} \ar[r] & a^{(I)},
}$$
indexed by $\zeta \in I.$
\end{theorem}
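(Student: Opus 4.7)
The plan is to verify the two conditions for $m_0 \colon B \to D_0$ (where $m_0$ denotes the domain component of the inflation $m$ of the bottom row) to be a special $a^{\perp}$-preenvelope of $B$: (a) $m_0 \in a^{\perp}$, and (b) the right-hand morphism $a^{(I)}$ of the bottom row lies in ${^{\perp}}(a^{\perp})$. Condition (b) is immediate: ${^{\perp}}(a^{\perp})$ is a left closed ideal containing $a$, and since $(\mcA;\mcE)$ has exact coproducts, Lemma~\ref{L:left closed} shows it is closed under coproducts of morphisms, whence $a^{(I)} \in {^{\perp}}(a^{\perp})$.

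For condition (a), I will run the Bongartz argument inside the $\ME$-exact category of arrows. Applying Lemma~\ref{L:infinite pushouts} in $(\Arr(\mcA); \Arr(\mcE))$ (which inherits exact coproducts from $(\mcA;\mcE)$ by Proposition~\ref{P:ME coproducts}), the bottom row $1_B \to \amalg_B c^{\zeta} \to a^{(I)}$ is a conflation of arrows whose domain components form a conflation $B \xrightarrow{m_0} D_0 \to A_0^{(I)}$ in $\mcA$ with $m_0$ as its inflation. For each $\zeta \in I$, the structural inflation $c^{\zeta} \to \amalg_B c^{\zeta}$ of the coproduct descends on domain components to a morphism of $\mcA$-conflations from the top row $\Xi_0^{\zeta}$ of $\xi^{\zeta}$ into this universal conflation, with identity on $B$ and the coproduct structural inflation $a_0^{\zeta} \colon A_0 \to A_0^{(I)}$ on the right. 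Because the left component is the identity, this morphism is purely a pullback along $a_0^{\zeta}$, so $\Xi_0^{\zeta} = (a_0^{\zeta})^{\ast}[B \xrightarrow{m_0} D_0 \to A_0^{(I)}]$ in $\Ext(A_0, B)$.

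Next, I will exploit the fact that the Leibniz-Ext pullback~(\ref{Eq:Ext pullback}) defining $\Ext_{\Arr(\mcA)}(a,1_B)$ has bottom arrow $\Ext(A_0, 1_B) = \id$, so it collapses to an isomorphism $I \cong \Ext(A_1, B)$. Under this identification, an arbitrary $\eta \in \Ext(A_1, B)$ corresponds to a unique $\zeta_{\eta} \in I$ whose top row $\Xi_0^{\zeta_\eta}$ is precisely $\Ext(a,B)(\eta)$. Applying $(m_0)_{\ast}$ and commuting pushout past pullback gives $\Ext(a, m_0)(\eta) = (a_0^{\zeta_\eta})^{\ast}(m_0)_{\ast}[B \xrightarrow{m_0} D_0 \to A_0^{(I)}]$. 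Since $m_0$ is itself the inflation of the conflation being pushed out, the pushout vanishes, and hence $\Ext(a, m_0)(\eta) = 0$ for every $\eta$, giving $m_0 \in a^{\perp}$.

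The main conceptual obstacle is identifying the correct index set: the argument succeeds only because $I = \Ext_{\Arr(\mcA)}(a, 1_B)$ collapses to $\Ext(A_1, B)$ (so that every Ext-class really is enumerated), and because the coproduct's structural inflations automatically realize each such class as the pullback along $a_0^{\zeta_\eta}$ of a single universal conflation $B \to D_0 \to A_0^{(I)}$ whose inflation is the candidate preenvelope $m_0$ — so one ``inflation-kills-its-own-class'' observation annihilates all of them simultaneously. The remaining work is diagrammatic bookkeeping in $(\Arr(\mcA); \Arr(\mcE))$, relying on Lemmas~\ref{L:infinite pushouts} and~\ref{L:left closed} together with the naturality of $\Ext$ in both variables.
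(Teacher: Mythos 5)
Your proof is correct, and while it implements the same Bongartz idea as the paper (push out along all classes in $I$, then observe that the resulting inflation kills every extension), the verification that $m_0 \in a^{\perp}$ is executed by a genuinely different route. The paper uses the characterization of $\Ext$-orthogonality via $\ME$-conflations: it takes an arbitrary $\ME$-conflation $\xymatrix@1@C=15pt{m_0 \ar[r] & y \ar[r] & a}$, identifies its top half with a representative $\xi^{\zeta}$, and invokes the universal property of the pushout to produce a lifting exhibiting null-homotopy. You instead compute $\Ext(a,m_0) = (m_0)_{\ast} \circ a^{\ast}$ directly: the collapse of the pullback~(\ref{Eq:Ext pullback}) to $I \cong \Ext(A_1,B)$ shows every class in the image of $a^{\ast}$ occurs as some $\Xi_0^{\zeta} = (a_0^{\zeta})^{\ast}[B \to D_0 \to A_0^{(I)}]$, and the universal conflation dies under $(m_0)_{\ast}$ because a conflation pushed out along its own inflation splits. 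Your version buys a shorter, purely bifunctorial argument that never mentions $\ME$-conflations or null-homotopy in this step; the paper's version stays uniformly inside the arrow-category formalism it uses throughout and does not depend on the special identification available only when the right-hand arrow is an identity $1_B$. All the individual steps you use check out: the morphism of conflations with identity on $B$ does realize $\Xi_0^{\zeta}$ as a pullback, pushout and pullback commute by bifunctoriality of $\Ext$, and $(m_0)_{\ast}$ annihilates the conflation with inflation $m_0$. One small slip: exactness of coproducts in $(\Arr(\mcA);\Arr(\mcE))$ is the remark preceding Proposition~\ref{P:ME coproducts}, not that proposition itself, which concerns the $\ME$-substructure; this does not affect the argument.
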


\begin{proof}
Lemma~\ref{L:left closed} implies that the coproduct $a^{(I)}$ belongs to the left closure of $a,$ so it remains to verify that the domain morphism $m_0$ of the pushout inflation
$$\xymatrix@C=30pt@R=30pt{
B \ar[r]^{m_0} \ar@{=}[d] & C_0 \ar[d]^{\amalg_B c^{\zeta}} \\
B \ar[r]^{m_1}  & C_1
}$$
belongs to $a^{\perp}.$ Recall that $\Ext_{\mcA} (a, m_0) = 0$ if and only if every $\ME$-conflation $\xymatrix@C=15pt{m_0 \ar[r] & y \ar[r] & a}$ is null homotopic, so assume that such an $\ME$-conflation is given, with $\ME$ factorization
$$\xymatrix@C=30pt@R=30pt{
B \ar@{=}[d] \ar[r] & Y_0 \ar[r] \ar[d]^{y_1} & A_0 \ar[d]^a \\
B \ar[r] \ar[d]^{m_0} & Y \ar[r] \ar[d]^{y_2} & A_1 \ar@{=}[d] \\
C_0 \ar[r] & Y_1 \ar[r] & A_1.
}$$
The morphism of conflations in the top half of the diagram is isomorphic to some $\xi \in \Ext_{\Arr (\mcA)} (a, 1_B),$ so we can replace $y_1 : Y_0 \to Y$ with some $c^{\zeta} : C_0^{\zeta} \to C_1^{\zeta},$ $\zeta \in I,$
$$\xymatrix@C=20pt@R=15pt{
& B \ar[ld]_(.4){m_0} \ar@{=}[dd]|!{[rr];[ld]}\hole \ar[rr]^{m_0^{\xi}} && C^{\xi}_0 \ar[rr] \ar[dd]^{c^{\xi}} \ar@{.>}[llld]^(.4){f_0} && A_0 \ar[dd]^a \\
C_0  \ar[dd]_{\amalg_B c^{\zeta}} \\
& B \ar[ld]_(.35){m_1} \ar[rr]^{m_1^{\xi}}  && C^{\xi}_1 \ar[rr] \ar@{.>}[llld]^{f_1} && A_1  \\
C_1.
}$$
The universal property of the pushout yields an extension of $m : 1_B \to \amalg_B \, c^{\zeta}$ along $m^{\xi},$ as indicated. The morphism $m_0 : B \to C_0$ therefore factors through $m_0^{\xi},$ which implies that the given $\ME$-conflation is null-homotopic.
\end{proof}

\subsection{The left closure of a set} The BET Lemma implies that the ideal $a^{\perp}$ is special preenveloping, so that Salce's Lemma yields the following.

\begin{corollary} \label{C:cyclic left closure}
Suppose that $(\mcA; \mcE)$ has enough projective morphisms and injective morphisms, and that $a \colon A_0 \to A_1$ is a morphism in $\mcA.$ The ideal cotorsion pair $({^{\perp}}(a^{\perp}), a^{\perp})$ generated by $a$ is complete. Furthermore, the closure ${^{\perp}}(a^{\perp})$ of $a$ is the least ideal containing $a$ and closed under coproducts of morphisms, and $\ME$-extensions by projective morphisms.
\end{corollary}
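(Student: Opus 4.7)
The plan is to derive the corollary directly from Theorem~\ref{T:Bongartz} together with the machinery of Section 4. First, the BET Lemma says that $a^{\perp}$ is special preenveloping, and since $(\mcA; \mcE)$ has enough projective and injective morphisms, the ideal version of Salce's Lemma (\cite[Theorem 18]{FGHT}, recalled in Section 4) upgrades this to completeness of the ideal cotorsion pair $({^{\perp}}(a^{\perp}), a^{\perp})$. This settles the first claim.

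For the second claim, I would first verify that ${^{\perp}}(a^{\perp})$ itself enjoys the three properties: it trivially contains $a,$ it is closed under coproducts of morphisms by Lemma~\ref{L:left closed}, and, being left closed by construction, it is closed under $\ME$-extensions by projective morphisms by Proposition~\ref{P:left closed}. The substantive part is minimality. Let $\mcK \lhd \mcA$ be any ideal that contains $a$ and is closed under coproducts of morphisms and under $\ME$-extensions by projective morphisms, and set $\mcK_0 := \mcK \cap {^{\perp}}(a^{\perp}).$ Since each of the three closure properties is preserved under intersection, $\mcK_0$ inherits them, and by construction $\mcK_0 \subseteq {^{\perp}}(a^{\perp}).$ For any $B \in \mcA,$ the BET Lemma exhibits a special $a^{\perp}$-preenvelope of $B$ whose cosyzygy is $a^{(I)}$ with $I = \Ext_{\Arr (\mcA)}(a, 1_B);$ since $a \in \mcK_0$ and $\mcK_0$ is closed under coproducts, $a^{(I)} \in \mcK_0.$ Hence $\mcK_0$ is an $a^{\perp}$-cosyzygy ideal.

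The final step invokes the dual of \cite[Theorem 6.4]{FH}, already used in the proof of Corollary~\ref{C:infinite sum}, which yields $\mcEproj \diamond \mcK_0 = {^{\perp}}(a^{\perp}).$ Because $\mcK_0$ is assumed closed under $\ME$-extensions by projective morphisms, $\mcEproj \diamond \mcK_0 \subseteq \mcK_0,$ and therefore ${^{\perp}}(a^{\perp}) \subseteq \mcK_0 \subseteq \mcK,$ as needed. The main conceptual step, and the only place any effort is required, is the passage to the intersection $\mcK_0$ and the recognition that it is a cosyzygy ideal; everything else is a direct bookkeeping of closure properties and an appeal to results already in hand.
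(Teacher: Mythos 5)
Your proposal is correct and follows essentially the same route as the paper: BET plus the ideal Salce Lemma for completeness, and the cosyzygy-ideal argument via $\mcEproj \diamond \mcK_0 = {^{\perp}}(a^{\perp})$ (the dual of Theorem 6.4 of \cite{FH}) for minimality. Your intersection step $\mcK_0 = \mcK \cap {^{\perp}}(a^{\perp})$ is a small but genuine refinement: the paper's proof only treats ideals already contained in ${^{\perp}}(a^{\perp}),$ whereas minimality requires comparison with an arbitrary ideal satisfying the three closure properties, which your reduction handles cleanly.
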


\begin{proof}
Only the last claim needs verification, so suppose that $\mcI \subseteq {^{\perp}}(a^{\perp})$ is an ideal containing $a$ that is closed under coproducts of morphisms, and $\ME$-extensions by projective morphisms. Then $a^{(I)} \in \mcI,$ for every index set $I,$ which implies that $\mcI$ is an $a^{\perp}$-cosyzygy ideal, and therefore that $\mcI = \mcEproj \diamond \mcI = {^{\perp}}(a^{\perp}).$
\end{proof}

Similarly, the ideal cotorsion pair generated by a set $\mcM$ of morphisms is complete. This is the ideal version of a result ~\cite[Theorem 10]{ET} of Eklof and Trlifaj.

\begin{corollary} \label{C:small left closure}
Suppose that $(\mcA; \mcE)$ has enough projective morphisms and injective morphisms. An ideal $\mcI \subseteq \mcA$ is the left closure of a set $\mcM = \{ m^j \: | \: j \in J \}$ of morphisms if and only if it is the least ideal of $\mcA$ that {\rm ($1$)} contains $\mcM;$ {\rm ($2$)} is closed under coproducts; and
{\rm ($3$)} is closed under $\ME$-extensions of projective morphisms. Every such ideal is special precovering.
\end{corollary}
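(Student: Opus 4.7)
The plan is to reduce the statement to the cyclic case of Corollary~\ref{C:cyclic left closure} by passing to the single morphism $m := \amalg_{j \in J} \, m^j$, which exists because $(\mcA; \mcE)$ has exact coproducts. As observed in the paragraph following Lemma~\ref{L:left closed}, the canonical isomorphism of Proposition~\ref{P:canonical morphism}, applied in the arrow category, yields $m^{\perp} = \mcM^{\perp}$, and hence the left closure ${^{\perp}}(\mcM^{\perp})$ of the set $\mcM$ coincides with the left closure ${^{\perp}}(m^{\perp})$ of the cyclic ideal $\langle m \rangle$. This reduction already disposes of the last sentence of the statement: the BET Lemma (Theorem~\ref{T:Bongartz}) gives that $m^{\perp}$ is special preenveloping, and Salce's Lemma then promotes the ideal cotorsion pair $({^{\perp}}(m^{\perp}), m^{\perp})$ to a complete one, so ${^{\perp}}(\mcM^{\perp})$ is special precovering.

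For the main equivalence, I will let $\mcI_0$ denote the least ideal satisfying conditions~(1)--(3); this is well defined, since the intersection of any family of ideals satisfying (1)--(3) still satisfies them, and ${^{\perp}}(\mcM^{\perp})$ is itself one such ideal by Lemma~\ref{L:left closed} and Proposition~\ref{P:left closed}. The inclusion $\mcI_0 \subseteq {^{\perp}}(\mcM^{\perp})$ then follows from minimality. For the reverse inclusion, I will observe that any ideal satisfying (1) and (2) must contain the single arrow $m = \amalg_j \, m^j$, and since such an ideal also satisfies (3), the cyclic characterization supplied by Corollary~\ref{C:cyclic left closure}, which identifies ${^{\perp}}(m^{\perp})$ with the least ideal containing $m$ that is closed under coproducts and $\ME$-extensions by projective morphisms, forces ${^{\perp}}(m^{\perp}) \subseteq \mcI_0$, that is, ${^{\perp}}(\mcM^{\perp}) \subseteq \mcI_0$.

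The only delicate ingredient is the identification $m^{\perp} = \mcM^{\perp}$, and this is precisely where the exact-coproducts hypothesis is essential: it is what guarantees that $\Ext$-orthogonality to the single coproduct arrow $m$ is equivalent to simultaneous $\Ext$-orthogonality to each $m^j$. Beyond this, no step is expected to present a real obstacle, since everything else is a direct appeal to the cyclic BET/Salce package encapsulated in Corollary~\ref{C:cyclic left closure}.
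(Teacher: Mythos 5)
Your proposal is correct and follows essentially the same route as the paper: reduce to the single arrow $m = \amalg_j\, m^j$ via the identity $m^{\perp} = \mcM^{\perp}$ established after Lemma~\ref{L:left closed}, and then invoke Corollary~\ref{C:cyclic left closure}. Your extra detail on the two inclusions for the ``least ideal'' characterization is a faithful unpacking of what the paper leaves implicit (though note you do not actually need to form intersections of families of ideals, which the paper cautions against for set-theoretic reasons; it suffices that ${^{\perp}}(\mcM^{\perp})$ satisfies (1)--(3) and is contained in every ideal that does).
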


\begin{proof}
Apply Corollary~\ref{C:cyclic left closure} to the morphism $a = \amalg_j \, m^j$ and recall that the left closure of $\mcM$ is the left closure of $a.$
\end{proof}

\subsection{The object version} Let us consider the special case of the BET Lemma when the arrow $a = 1_A$ is an object.

\begin{corollary} \label{C:ET Lemma}
Let $A$ and $B$ be objects of $(\mcA; \mcE)$ and suppose that the $\Ext$-group $\Ext_{\mcA} (A,B)$ is a set. A special preenvelope of $B$ with respect to the ideal $A^{\perp} := \{ b : B_0 \to B_1 \; | \; \Ext_{\mcA} (A,b) = 0  \}$ is given by the pushout in the bottom row of
$$\xymatrix@C=30pt@R=30pt{
B \ar[r] \ar@{=}[d] & C_{\xi} \ar[r]  \ar[d] & A \ar[d]^{\iota_{\xi}} \\
B \ar[r] & \amalg_B \, C_{\xi} \ar[r] & A^{(I)},
}$$
indexed by $\xi \in I = \Ext (A,B).$ If $(\mcA; \mcE)$ has enough projective morphisms and injective morphisms, then the left closure ${^{\perp}}(A^{\perp}) = \mcEproj \diamond \langle \Add (A) \rangle$ is a special precovering ideal.
\end{corollary}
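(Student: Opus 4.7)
The strategy is to apply the BET Lemma (Theorem~\ref{T:Bongartz}) to the arrow $a = 1_A$ and then unpack the arrow-category construction to the object level. First I would observe that $1_A^{\perp} = A^{\perp}$: the map $\Ext_{\mcA}(1_A, b)$ is literally $\Ext_{\mcA}(A,b)\colon \Ext_{\mcA}(A, B_0) \to \Ext_{\mcA}(A, B_1)$, so the two vanishing conditions coincide. Second, by the isomorphism $\Ext_{\mcA}(A,B) \cong \Ext_{\Arr (\mcA)}(1_A, 1_B)$ recorded in \S3.2, together with the explicit description that every conflation $1_B \to c \to 1_A$ in $\Arr (\mcE)$ is isomorphic to one of the form $1_B \to 1_{C_\xi} \to 1_A$ arising from a conflation $\Xi_\xi \colon B \to C_\xi \to A$ in $(\mcA; \mcE)$, I can index the pushout of Theorem~\ref{T:Bongartz} by $\xi \in I = \Ext_{\mcA}(A,B)$ and take $c^\xi = 1_{C_\xi}$.

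The next step is to unpack the BET pushout diagram. Because coproducts and pushouts in $(\Arr (\mcA); \Arr (\mcE))$ descend coordinatewise whenever the source of the inflation being pushed out is the identity arrow $1_B$, the coproduct $1_A^{(I)}$ coincides with $1_{A^{(I)}}$, and the structural inflations in $\Arr (\mcA)$ restrict to the structural inflations $\iota_{\xi}\colon A \to A^{(I)}$ on the cokernel coordinate. Consequently the arrow-category pushout of Theorem~\ref{T:Bongartz} collapses to the object-level pushout displayed in the statement, and the domain morphism $B \to \amalg_B C_\xi$ is certified by Theorem~\ref{T:Bongartz} as a special $A^{\perp}$-preenvelope of $B$.

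For the second assertion I would invoke Corollary~\ref{C:small left closure} applied to the singleton $\mcM = \{1_A\}$: the ideal ${^{\perp}}(A^{\perp}) = {^{\perp}}(1_A^{\perp})$ is the least ideal containing $1_A$ that is closed under coproducts of morphisms and under $\ME$-extensions by projective morphisms. The least ideal containing $1_A$ that is closed under coproducts is precisely $\langle \Add (A) \rangle$, because $1_A^{(J)} = 1_{A^{(J)}}$, so every morphism of the form $g \circ 1_{A^{(J)}} \circ h$ factors through an object of $\Add (A)$ and, conversely, every morphism factoring through a summand of $A^{(J)}$ arises in this way. Applying $\mcEproj \diamond$ then yields ${^{\perp}}(A^{\perp}) = \mcEproj \diamond \langle \Add (A) \rangle$, and Corollary~\ref{C:small left closure} simultaneously delivers that this ideal is special precovering. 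The main technical care needed is the bookkeeping in the second paragraph, verifying that the arrow-category pushout indexed by $\xi \in I$ reduces precisely to the stated object-level pushout with the structural inflations $\iota_\xi$ as the right-hand column; once that is settled, everything follows from results already in place.
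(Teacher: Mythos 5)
Your proposal is correct and follows essentially the same route as the paper: specialize the BET Lemma to $a = 1_A$, use $\Ext_{\mcA}(A,B) \cong \Ext_{\Arr(\mcA)}(1_A,1_B)$ and the fact that object arrows are closed under coproducts and pushouts over object arrows to collapse the arrow-category pushout to the stated object-level one, and then deduce the left-closure identification from Corollary~\ref{C:cyclic left closure} (your appeal to Corollary~\ref{C:small left closure} for the singleton $\{1_A\}$ amounts to the same thing). The only point worth tightening is the final step: rather than asserting that ``applying $\mcEproj \diamond$'' to $\langle \Add(A)\rangle$ gives the closure, note that $\langle \Add(A)\rangle$ contains the cosyzygy $1_{A^{(I)}}$ and is therefore an $A^{\perp}$-cosyzygy ideal, so the dual of~\cite[Theorem 6.4]{FH} gives $\mcEproj \diamond \langle \Add(A)\rangle = {^{\perp}}(A^{\perp})$ directly.
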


\begin{proof}
Just recall that $\Ext_{\mcA}(A,B) = \Ext_{\Arr (\mcA)}(1_A, 1_B),$ while $\Ext_{\mcA} (A,b) = \Ext_{\mcA} (1_A,b)$ and note that the object arrows $1_A$ are closed under coproducts and pushouts over object arrows $1_B$ in $(\Arr (\mcA); \Arr (\mcE)).$ Then apply the BET Lemma and Corollary~\ref{C:cyclic left closure}.
\end{proof}

If the category $\RMod$ of left $R$-modules is endowed with the standard exact structure of an abelian category and $A = \oplus \; \{ V \; | \; V \in \Rmod \}$ is the direct sum of all the finitely presented left $R$-modules, then the morphisms of $A^{\perp}$ are called {\em cophantom} morphisms~\cite[Proposition 37]{FGHT} and the ideal $A^{\perp}$ is denoted by $\Psi.$ Corollary~\ref{C:ET Lemma} gives another proof of the completeness ({\em ibid.}) of the ideal cotorsion pair $({^{\perp}}\Psi, \Psi),$ and shows that the ideal ${^{\perp}}\Psi$ is the object ideal generated by the modules ${_R}M$ that arise as extensions
$$\xymatrix@C=30pt{0 \ar[r] & P \ar[r] & M \ar[r] & N \ar[r] & 0
}$$
where $P$ is projective and $N$ is pure projective.

\end{document}